\documentclass[12pt,a4wde]{article}
\usepackage{amsmath, amsfonts, amssymb, amsthm, euscript, amscd, latexsym, mathtools}
 \usepackage[pdftex,colorlinks=true,
                       pdfstartview=FitV,
                       linkcolor=blue,
                       citecolor=blue,
                       urlcolor=blue,
           ]{hyperref}

\sloppy
\frenchspacing
\oddsidemargin=1.9cm
\hoffset=-0.5cm
\voffset=-1cm
\def\eee#1{ \begin{equation} #1 \end{equation} }
\def\aa#1{ \begin{align*} #1 \end{align*} }
 \def\aaa#1{ \begin{align} #1 \end{align} }
\def\mm#1{ \begin{multline*} #1 \end{multline*} }
\def\mmm#1{ \begin{multline} #1 \end{multline} }


\newtheorem{thm}{\sc Theorem}
\newtheorem{lem}{\sc Lemma}

\newcommand{\lb}{\label}
\newcommand{\rf}{\eqref}

\newcommand{\im}{\mathop{\rm Im}\nolimits}

\newcommand{\sss}{\scriptscriptstyle}

\newcommand{\gt}{\geqslant}
\newcommand{\lt}{\leqslant}
\newcommand{\sub}{\subset}

\newcommand{\al}{\alpha}
\newcommand{\gm}{\gamma}
\newcommand{\Gm}{\Gamma}

\newcommand{\bi}{\begin{itemize}}
\newcommand{\ei}{\end{itemize}}

\newcommand{\sg}{\sigma}
\newcommand{\Sg}{\Sigma}

\newcommand{\om}{\omega}
\newcommand{\mc}{\mathcal}
\newcommand{\Om}{\Omega}

\newcommand{\td}{\tilde}

\newcommand{\<}{{_{\sss E}\langle}}
\renewcommand{\>}{\rangle_{\sss E^*}}

\newcommand{\x}{\times}
\newcommand{\mto}{\mapsto}
\newcommand{\E}{\mathbb E}
\newcommand{\PP}{\mathbb P}
\newcommand{\W}{\mathbb W}

\newcommand{\Tr}{{\rm Tr\,}}

\newcommand{\spn}{{\rm span}}

\newcommand{\ovl}{\overline}

\DeclareMathOperator{\ind}{\mathbb I}

\newcommand{\fdot}{\,\cdot\,}
\def\Rnu{{\mathbb R}}

\def\Nnu{{\mathbb N}}

\def\ffi{\varphi}

\long\def\symbolfootnote[#1]#2{\begingroup%
\def\thefootnote{\fnsymbol{footnote}}\footnote[#1]{#2}\endgroup}

\include{srctex.sty}

 \begin{document}

\author{Evelina Shamarova}
 \title{A version of H\"ormander's theorem \\ in 2-smooth Banach spaces}
\date{}
  \maketitle
  \vspace{-10mm}
\begin{center}
\small
 Centro de Matem\'atica da Universidade do Porto\\
 Rua do Campo Alegre 687, 4169-007 Porto, Portugal\\
 Email: evelinas@fc.up.pt
\end{center}


\begin{abstract}
We consider a stochastic evolution equation
in a 2-smooth Banach space with a densely and continuously embedded Hilbert subspace.
We prove that under H\"ormander's bracket condition, 
the image measure of the solution law 
under any finite-rank bounded linear operator
is absolutely continuous with respect to the Lebesgue measure.
To obtain this result, we apply methods of the Malliavin calculus.
\end{abstract}

  \vspace{1mm}

\section{Introduction}
Let $E$ be a 2-smooth Banach space (below, we recall the definition), and
$H\sub E$ be a Hilbert subspace. Further let
$H$ be dense in $E$, and 
the canonical embedding $H \xhookrightarrow {} E$ be continuous.
We consider the following stochastic evolution equation in $E$:
\aaa{
\label{SDE}
\begin{split}
& dX_t = (AX_t+\al(X_t))dt + \sg(X_t)dW_t,\\
& X_0 = x,
\end{split}
}
where $W_t$ is an $H$-cylindrical Brownian motion, 
$A$ is a generator of a strongly continuous semigroup on $E$,
$\al$ is a function $E\to E$, and $\sg$ maps $E$ to the space of 
$\gm$-radonifying operators $H\to E$ (see \cite{neer1}) denoted by $\gm(H,E)$.
Further let $\{e_i\}_{i=1}^\infty$ denote an orthonormal basis in $H$.
We prove that if $X_t$ is a solution to \eqref{SDE},
$F: E \to \Rnu^k$ is a bounded linear operator of rank $k$,
then,
under H\"ormander's bracket condition applied to the infinite system
of vectors $\sg_i(x)=\sg(x)e_i$, $i=1,2,\ldots $, and $\sg_0(x)= Ax + \al(x) + 
\sum_{i=1}^\infty \sg'_i(x) \sg_i(x)$, 
the law of $FX_t$ for any fixed $t$ is absolutely continuous
with respect to the Lebesgue measure on $\Rnu^k$. 
Since not every Banach space suits for consideration of equation \rf{SDE},
we work in 2-smooth Banach spaces, where we can employ the theory of stochastic integration 
and stochastic evolution equations 
\cite{brz, brz1, dett1, dett2}.
We mention that there exists a large class of UMD Banach spaces
where the latter theory was developed as well (see \cite{neer2,neer3}).
However, UMD Banach spaces do not seem suitable for proving our main
result. 

Regularity of transition probabilities
for solutions to infinite-dimensional SDEs 
under H\"ormander-type assumptions has been studied by many authors
(see, for example, \cite{Mattingly,hypoelipticity,Heirer,Ocone}).
Also, we would like to mention the work \cite{Belopolskaya}, 
where the authors prove the existence of
the logarithmic derivative (see \cite{ASF,Belopolskaya}) for the
transition probability of the solution to a Banach space valued SDE. 
However, all of the above articles, except \cite{Belopolskaya},
deal with Hilbert space-valued SDEs, and, to the author's knowledge,
a Banach space version of H\"ormander's theorem is 
obtained for the first time.

The paper is organized as follows: in Section \ref{existence} we 
sketch the proof of the existence and uniqueness of the mild solution to
\rf{SDE} by methods developed in \cite{Belopolskaya}.
In Section \ref{S2},
we obtain an SDE for the Malliavin derivative of the solution
to \rf{SDE}. 
The concept of the Malliavin derivative of a Banach space-valued
random variable was introduced, for example, in \cite{jan-maas}. 
In Section \ref{Diff}, 
we prove the Fr\'echet differentiability of the solution to \rf{SDE}
with respect to the initial data, and show that the Fr\'echet derivative
is the unique solution to an SDE in $\gm(H,E)$. The latter space
is also 2-smooth which allows us
to apply the results of Section \ref{existence} on the existence of solutions.
In Section \ref{inverse}, under some additional assumptions, we prove the existence of 
the right inverse operator to the derivative from Section \ref{Diff}.
Finally, in Section \ref{Hormander}, 
we show the non-degeneracy of the Malliavin covariance matrix 
of $FX_t$, and, by this, the existence of a density of the law of
$FX_t$ with respect to the Lebesgue measure.
In fact, we obtain
an infinite-dimensional analog of Nualart's proof \cite{Nualart}.

We remark that for an infinite dimensional SDE, the Fr\'echet derivative
of the solution with respect to the initial data, in general, exists
only in the mean-square sense, i.e. as  
a bounded linear operator $E\to L_2(\Om,E)$, 
although it would be desirable for our construction
to have it as bounded operator $E\to E$ a.s.
In the finite dimensional case, the latter fact holds due to Kolmogorov's continuity theorem.
Thus, one of the main difficulties of this work
was to find assumptions under which the infinite-dimensional
Fr\'echet derivative and its right inverse operator are a.s. 
bounded operators $E\to E$. 
To make our results valid for a larger class of operators $A$, such 
as Laplacian and other differential operators, we avoid the assumption on $A$ to generate
a group, as it was imposed in \cite{hypoelipticity}, 
although it would significantly simplify our arguments.

Finally, we remark, that the 
theory of differentiability of measures, developed in \cite{ASF},
offers an alternative, to the Malliavin calculus, approach to regularity of 
transition probabilities. This approach was undertaken
in \cite{Belopolskaya}. 
However, SDEs considered in \cite{Belopolskaya} are not stochastic evolution
equations, and therefore, the existence and smoothness of the density
does not follow from \cite{Belopolskaya} automatically. 
The present article
considers the ``traditional" Malliavin calculus approach to H\"ormander's theorem.

%
\section{Existence of the mild solution}
\label{existence}
Let $(\Om, \mc F, \PP)$ be a probability space, 
$W_t$ be an $H$-cylindrical Brownian motion, and
$\mc F_t$ be the filtration generated by $W_t$.
We consider stochastic evolution equation \rf{SDE}
in a 2-smooth Banach space $E$. We recall that
a Banach space $E$ is called 2-smooth 
if there exists a constant $C>0$ so that for all $x$ and $y$ from $E$,
\aa{
\|x+y\|^2 + \|x-y\|^2 \lt 2\|x\|^2 + C\|y\|^2.
}
We prove the existence of a mild solution to \rf{SDE} on 
the interval $[0,T]$,  $T>0$, i.e.
an $\mc F_t$-adapted stochastic process $X_t$ satisfying
\aaa{
\label{mild_SDE}
X_t = e^{tA}x + \int_0^t e^{(t-s)A}\al(X_s)\, ds
+ \int_0^t e^{(t-s)A}\sg(X_s)\, dW_s,
}
where 
$x\in E$, $e^{tA}$ is the semigroup generated by $A$. We assume that
$\al: E\to E$ and  $\sg: E \to \gm(H,E)$ satisfy
the following Lipschitz and linear growth conditions: 
\begin{enumerate}
\item[\textbf{A1}] \hspace{2mm}
$\|\al(x)-\al(y)\|_E + \|\sg(x)-\sg(y)\|_{\gm(H,E)} \lt \gm_1 \|x-y\|_E$,\\
for all $x, y \in E$, and for some constant $\gm_1$.
\item[\textbf{A2}] \hspace{2mm}
$\|\al(x)\|_E + \|\sg(x)\|_{\gm(H,E)}
\lt \gm_2(1 + \|x\|_E)$,\\
for all $x, y \in E$, and for some constant $\gm_2$.
\end{enumerate}
For any Banach space $G$,
in the space $\mc F_t$-adapted $G$-valued stochastic processes
we introduce the norm:
\aaa{
\label{s2_norm}
\|\xi\|_{S_2(G)}^2 = \sup_{t\in [0,T]}\E\|\xi(t,\fdot)\|_G^2.
}
\begin{thm}
\label{thm0}
Let A1 and A2 hold. Then equation \eqref{mild_SDE}
has a unique solution in the space $S_2(E)$. This solution
has a continuous path modification.
\end{thm}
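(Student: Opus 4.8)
The plan is to construct the solution as the unique fixed point of the integral operator associated with \rf{mild_SDE} and to invoke the Banach fixed-point theorem in the space $S_2(E)$. First I would check that $S_2(E)$, equipped with the norm \rf{s2_norm}, is a Banach space of $\mc F_t$-adapted processes, so that a contraction argument applies there. Then I would introduce the operator $\mc K$ given by
\aa{
(\mc K\xi)_t = e^{tA}x + \intl_0^t e^{(t-s)A}\al(\xi_s)\,ds + \intl_0^t e^{(t-s)A}\sg(\xi_s)\,dW_s,
}
and prove two things: that $\mc K$ maps $S_2(E)$ into itself, and that some power of $\mc K$ is a strict contraction.

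The heart of the matter is a pair of $L^2$-estimates for the two convolution terms. For the deterministic convolution, the uniform bound $M=\sup_{t\in[0,T]}\|e^{tA}\|<\infty$ coming from strong continuity, together with the Cauchy--Schwarz inequality, gives $\E\|\intl_0^t e^{(t-s)A}\al(\xi_s)\,ds\|_E^2 \lt M^2 T \intl_0^t \E\|\al(\xi_s)\|_E^2\,ds$. For the stochastic convolution I would use the decisive feature of the 2-smooth geometry: in such spaces the $\gm$-radonifying stochastic integral satisfies a Burkholder--Davis--Gundy type inequality (see \cite{brz, dett1}), so that $\E\|\intl_0^t e^{(t-s)A}\sg(\xi_s)\,dW_s\|_E^2 \lt C M^2 \intl_0^t \E\|\sg(\xi_s)\|_{\gm(H,E)}^2\,ds$, where $C$ depends only on the 2-smoothness constant. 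Feeding the linear growth bound A2 into these estimates shows $\mc K(S_2(E))\sub S_2(E)$, while feeding in the Lipschitz bound A1 yields, for $\xi,\eta\in S_2(E)$,
\aa{
\sup_{r\in[0,t]}\E\|(\mc K\xi)_r-(\mc K\eta)_r\|_E^2 \lt L \intl_0^t \Bigl(\sup_{u\in[0,s]}\E\|\xi_u-\eta_u\|_E^2\Bigr)\,ds
}
with $L$ assembled from $M$, $C$, $\gm_1$ and $T$.

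Since $L$ need not be less than one, $\mc K$ itself may fail to contract on all of $[0,T]$; the remedy is to iterate the last inequality, which produces the factorial gain $\sup_{r\in[0,T]}\E\|(\mc K^n\xi)_r-(\mc K^n\eta)_r\|_E^2 \lt \frac{(LT)^n}{n!}\,\|\xi-\eta\|_{S_2(E)}^2$. For $n$ large enough $(LT)^n/n!<1$, so $\mc K^n$ is a contraction, and the standard corollary of the Banach fixed-point theorem yields a unique fixed point $X\in S_2(E)$, which is the sought mild solution. I expect the precise form of the $\gm$-radonifying estimate --- the step where 2-smoothness genuinely enters and without which the whole scheme collapses --- to be the delicate point; the remainder is a routine transcription of the finite-dimensional Picard argument.

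For the continuous modification I would first upgrade the moment bounds: the $p$-th moment version of the above Burkholder--Davis--Gundy inequality, available for all $p\gt 2$ in 2-smooth spaces, combined with A2 and Gronwall's lemma, gives $\sup_{t\in[0,T]}\E\|X_t\|_E^p<\infty$ for every $p$. With this integrability in hand I would apply the factorization method of Da Prato, Kwapie\'n and Zabczyk, adapted to the 2-smooth setting in \cite{dett1, dett2}: writing the stochastic convolution through the factorization formula that splits it into two fractional integrals reduces path continuity to the continuity of a deterministic convolution of an $L^p$-process, which holds for $p$ large by the associated Sobolev embedding, while the drift convolution is continuous by strong continuity of the semigroup and dominated convergence. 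This produces the continuous modification of $X$ and finishes the proof.
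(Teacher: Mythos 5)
Your existence and uniqueness argument is essentially the paper's own proof: the same Picard map on $S_2(E)$, the same $L^2$-estimate for the stochastic convolution coming from the 2-smooth geometry (the paper cites \cite{brz} for exactly this bound), and the same factorial trick $\frac{K^nT^n}{n!}<1$ to make an iterate of the map a contraction. The only place you diverge is the continuous path modification. The paper disposes of it in one line by invoking the maximal inequality for stochastic convolutions in 2-smooth Banach spaces from \cite{neer4}, which directly yields a continuous version of the stochastic convolution and hence of the solution via A1. You instead propose upgrading to $p$-th moments and running the Da Prato--Kwapie\'n--Zabczyk factorization method. Both routes work; the maximal-inequality route is shorter and avoids the higher-moment bootstrap entirely, while your factorization argument is more self-contained in spirit but requires the extra step $\sup_t\E\|X_t\|_E^p<\infty$ for large $p$ and the Sobolev-embedding lemma behind the factorization formula. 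Either way the theorem is proved; your version is correct but does more work than necessary on the continuity claim.
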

\begin{proof}
The scheme of the proof is similar to which was used in \cite{Belopolskaya}.
We will search for the solution in $S_2(E)$. 
Consider the map $\Gm : S_2(E) \to S_2(E)$,
\aaa{
\lb{Gam}
\Gm(X_t) = e^{tA}x + \int_0^t e^{(t-s)A}\al(X_s)\, ds
+ \int_0^t e^{(t-s)A}\sg(X_s)\, dW_s.
}
By the results of \cite{brz}, $\int_0^t e^{(t-s)A}\sg(X_s)\, dW_s$
is $E$-valued, and
\aa{
\Big\|\int_0^t e^{(t-s)A}\sg(X_s)\, dW_s\Big\|_E^2
\lt C \int_0^t \|e^{(t-s)A}\sg(X_s)\|^2_{\gm(H,E)} ds.
}
By A2, the map $\Gm$ is well-defined.
Then, A1 and usual stochastic integral estimates
imply that there exists a constant $K>0$ so that
for each pair $X$ and $X'$ from $S_2(E)$
\aa{
\sup_{t\in [0,T]}\E\|\Gm^n(X_t) - \Gm^n(X'_t)\|^2_E \lt \frac{K^n T^n}{n!}
\sup_{t\in [0,T]}\E\|X_t-X'_t\|_E^2.
}
Pick up the integer $n$ so that 
$\frac{K^n T^n}{n!}< 1$. Then $\Gm^n :S_2 \to S_2$ is a contraction map.
The unique
fixed point of the map $\Gm^n$ is also the unique fixed point of $\Gm$.
By the results of \cite{neer4}, the stochastic convolution in \rf{mild_SDE}
has a continuous version. This and Assumption A1 imply that
the solution $X_t$ also has a continuous version.
\end{proof}

\section{The Malliavin derivative of the solution}
\label{S2}
The Malliavin derivative of a Banach space-valued random variable 
was defined in \cite{jan-maas}, pp. 154-155.
Let $\mc H = L_2([0,T], H)$ be the Hilbert space 
where we consider the isonormal Gaussian process
$\W(h) = \int_0^t h(s)dW_s$, $h\in \mc H$.  
According to \cite{jan-maas},
the domain $\mathbb D^{1,2}$ of the Malliavin 
derivative operator $D$ is defined by the squared norm
\aa{
\|\xi\|_{\mathbb D^{1,2}}^2 = \|\xi\|_{L_2(\Om,E)}^2 + \|D\xi\|_{L_2(\Om,\gm(\mc H,E))}^2.
}
In the following, we will need the two lemmas below. 
Lemma \ref{le1} is proved in \cite{pronk-veraar} (Lemma 3.7).
\begin{lem}
\lb{le1}
Let $G$ be a reflexive Banach space.
Suppose $\xi_n\to \xi$ in $L_2(\Om,G)$ and there is a constant $C>0$ such that
\aaa{
\lb{asb}
\sup_n \E\|D\xi_n\|^2_{\gm(\mc H,G)} < C.
}
Then, $\xi\in \mathbb D^{1,2}$, $\E\|D\xi\|^2_{\gm(\mc H,G)} < C$, and, moreover,
there exists a weakly convergent subsequence $D\xi_{n_k}\to D\xi$.
\end{lem}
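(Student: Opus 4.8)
The plan is to exploit two structural facts: the closedness of the Malliavin derivative operator $D$ on its domain $\mathbb D^{1,2}$, and the weak sequential compactness of norm-bounded sets in $L_2(\Om, \gm(\mc H, G))$, the latter being precisely the role played by the reflexivity of $G$. The hypothesis \rf{asb} provides a sequence $\{D\xi_n\}$ that is bounded in $L_2(\Om, \gm(\mc H, G))$; the idea is to pass to a weakly convergent subsequence $D\xi_{n_k}\rightharpoonup\eta$ and then to show that the limit $\eta$ is forced to equal $D\xi$. This simultaneously yields $\xi\in\mathbb D^{1,2}$, the identity $D\xi=\eta$, and the weakly convergent subsequence asserted in the statement.

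First I would invoke reflexivity of $G$ to obtain, through the duality theory of $\gm$-radonifying operators, reflexivity of $\gm(\mc H, G)$ and hence of the Bochner space $L_2(\Om, \gm(\mc H, G))$; by the Eberlein--\v{S}mulian theorem the bound \rf{asb} then yields a subsequence with $D\xi_{n_k}\rightharpoonup\eta$ weakly. To identify $\eta$ I would use that, $D$ being closed, its graph $\mc G=\{(\zeta, D\zeta):\zeta\in\mathbb D^{1,2}\}$ is a norm-closed linear subspace of $L_2(\Om, G)\x L_2(\Om, \gm(\mc H, G))$, hence convex and therefore weakly closed by Mazur's theorem. Since $\xi_{n_k}\to\xi$ in $L_2(\Om, G)$ (in particular weakly) and $D\xi_{n_k}\rightharpoonup\eta$, the pairs $(\xi_{n_k}, D\xi_{n_k})$ converge weakly to $(\xi, \eta)$ in the product space; weak closedness of $\mc G$ forces $(\xi,\eta)\in\mc G$, i.e. $\xi\in\mathbb D^{1,2}$ and $D\xi=\eta$. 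The norm bound is then immediate from weak lower semicontinuity of the norm:
\aa{
\E\|D\xi\|^2_{\gm(\mc H, G)} \lt \liminf_k \E\|D\xi_{n_k}\|^2_{\gm(\mc H, G)} \lt \sup_n \E\|D\xi_n\|^2_{\gm(\mc H, G)} < C.
}
An alternative identification of $\eta$, avoiding the graph, tests against the adjoint (divergence) operator $\delta$: one passes to the limit in the integration-by-parts relation pairing $D\xi_n$ with processes $u$ in the domain of $\delta$, using weak convergence on one side and the $L_2(\Om, G)$-convergence of $\xi_n$ on the other.

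The main obstacle is exactly the weak sequential compactness step, that is, the reflexivity of $\gm(\mc H, G)$. This rests on the trace duality $\gm(\mc H, G)^*=\gm(\mc H, G^*)$ for $\gm$-radonifying operators, and here some care is needed, since this identification is transparent only under good type/cotype properties of $G$; it is through this duality that reflexivity of $G$ is transferred to $\gm(\mc H, G)$ and then to the Bochner space. For the application in the present paper no real difficulty arises, since there $G$ is $2$-smooth, hence $K$-convex, so the duality holds isometrically and $\gm(\mc H, G)$ is reflexive. The remaining ingredient, the closedness of $D$ on $\mathbb D^{1,2}$, is part of the construction of the Banach space-valued Malliavin derivative in \cite{jan-maas} and may be taken for granted.
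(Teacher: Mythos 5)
The paper offers no proof of this lemma; it is imported from \cite{pronk-veraar} (Lemma 3.7), so there is no internal argument to measure yours against. Your skeleton is the expected one: extract a weakly convergent subsequence $D\xi_{n_k}\rightharpoonup\eta$ from the bound \rf{asb}, observe that the graph of the closed operator $D$ is a norm-closed linear (hence convex, hence weakly closed) subspace of $L_2(\Om,G)\x L_2(\Om,\gm(\mc H,G))$, and conclude that $(\xi,\eta)$ lies in the graph, giving $\xi\in\mathbb D^{1,2}$, $D\xi=\eta$, and the norm estimate by weak lower semicontinuity. All of that is correct.

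The genuine gap is the compactness step, and you have located it yourself but not closed it. Reflexivity of $G$ alone does not yield reflexivity of $\gm(\mc H,G)$: the trace duality $\gm(\mc H,G)^*\simeq\gm(\mc H,G^*)$ on which your transfer rests is valid precisely when $G$ is $K$-convex, i.e.\ has nontrivial type, and there exist reflexive spaces of trivial type (for instance the $\ell_2$-sum of the spaces $\ell_1^n$), for which $\gm(\mc H,G)$ has a strictly larger dual and fails to be reflexive. Hence, as a proof of the lemma in the stated generality (``$G$ reflexive''), the appeal to Eberlein--\v{S}mulian in $L_2(\Om,\gm(\mc H,G))$ is unjustified; one must either add a type hypothesis or obtain the weak limit without passing through reflexivity of $\gm(\mc H,G)$, e.g.\ by a diagonal argument on the scalar sequences $\langle D\xi_n h,g^*\rangle$ over countable total families in $\mc H$ and $G^*$, identifying the limit through duality with the divergence operator as in the alternative you sketch. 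For the only instance the paper actually uses --- $G=E$ a $2$-smooth, hence type-$2$ and $K$-convex, space --- your argument is complete as written.
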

Notice that a 2-smooth Banach space is 
uniformly smooth, and, therefore, reflexive.
Hence, Lemma \ref{le1} holds with $G=E$.
Lemma \ref{chain-lem} below is a simple version of the chain rule. 
\begin{lem}
\lb{chain-lem}
Let $\xi\in \mathbb D^{1,2}$ and let $F:E\to E$ have a bounded continuous 
Fr\'echet derivative.
Then, $F(\xi)\in\mathbb D^{1,2}$, and
\aaa{
\lb{chain}
DF(\xi) = F'(\xi) D\xi.
}
\end{lem}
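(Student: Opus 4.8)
The plan is to prove \rf{chain} first for smooth cylindrical $\xi$ and then to pass to a general $\xi\in\mathbb D^{1,2}$ by approximation, using Lemma \ref{le1} to identify the limiting derivative. Throughout I would use the left ideal property of $\gm$-radonifying operators: if $R\in\gm(\mc H,E)$ and $S\in\mc L(E,E)$, then $SR\in\gm(\mc H,E)$ with $\|SR\|_{\gm(\mc H,E)}\lt\|S\|_{\mc L(E,E)}\|R\|_{\gm(\mc H,E)}$. Since $F'(\xi)\in\mc L(E,E)$ with $\sup_x\|F'(x)\|_{\mc L(E,E)}=:M<\infty$, this shows that the right-hand side $F'(\xi)D\xi$ of \rf{chain} is a well-defined element of $L_2(\Om,\gm(\mc H,E))$ whenever $D\xi\in L_2(\Om,\gm(\mc H,E))$. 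The same bound makes $F$ globally Lipschitz with constant $M$, so that $F(\xi)\in L_2(\Om,E)$.

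Fix $\xi\in\mathbb D^{1,2}$ and choose smooth cylindrical $\xi_n\to\xi$ in $\mathbb D^{1,2}$, say $\xi_n=f_n(\W(h_1),\ldots,\W(h_{m_n}))$ with $f_n:\Rnu^{m_n}\to E$ smooth and $h_1,\ldots,h_{m_n}\in\mc H$ orthonormal. The composite $F\circ f_n:\Rnu^{m_n}\to E$ is only $C^1$, so $F(\xi_n)$ need not be smooth cylindrical; this is the single point where the mere $C^1$-regularity of $F$ has to be dealt with. Because $\xi_n$ depends on finitely many coordinates, I would resolve it by mollifying in the finite-dimensional variable: let $g_{n,k}=(F\circ f_n)*\psi_k$ for a standard mollifier $\psi_k$ on $\Rnu^{m_n}$. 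Then $g_{n,k}$ is smooth, $g_{n,k}\to F\circ f_n$ and $\nabla g_{n,k}\to\nabla(F\circ f_n)=F'(f_n)\nabla f_n$ locally uniformly, with $\|\nabla g_{n,k}\|$ controlled by $M\|\nabla f_n\|$. Applying $D$ to the smooth cylindrical functionals $g_{n,k}(\W(h_1),\ldots,\W(h_{m_n}))$ and letting $k\to\infty$, the closedness of $D$ gives $F(\xi_n)\in\mathbb D^{1,2}$ with $DF(\xi_n)=F'(\xi_n)D\xi_n$.

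It remains to pass to the limit in $n$. Since $F$ is $M$-Lipschitz, $F(\xi_n)\to F(\xi)$ in $L_2(\Om,E)$. For the derivatives I would write
\[
F'(\xi_n)D\xi_n-F'(\xi)D\xi=F'(\xi_n)(D\xi_n-D\xi)+(F'(\xi_n)-F'(\xi))D\xi.
\]
The first term is bounded in $\gm(\mc H,E)$ by $M\|D\xi_n-D\xi\|_{\gm(\mc H,E)}$ and hence tends to $0$ in $L_2(\Om,\gm(\mc H,E))$. For the second term, pass to a subsequence along which $\xi_n\to\xi$ a.s.; continuity of $F':E\to\mc L(E,E)$ gives $\|(F'(\xi_n)-F'(\xi))D\xi\|_{\gm(\mc H,E)}\lt\|F'(\xi_n)-F'(\xi)\|_{\mc L(E,E)}\|D\xi\|_{\gm(\mc H,E)}\to 0$ a.s., with the integrable dominating function $2M\|D\xi\|_{\gm(\mc H,E)}$, so dominated convergence yields convergence to $0$ in $L_2$. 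Thus $DF(\xi_n)=F'(\xi_n)D\xi_n\to F'(\xi)D\xi$ in $L_2(\Om,\gm(\mc H,E))$, and in particular $\sup_n\E\|DF(\xi_n)\|^2_{\gm(\mc H,E)}<\infty$.

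Finally I would invoke Lemma \ref{le1} with $G=E$: from $F(\xi_n)\to F(\xi)$ in $L_2(\Om,E)$ and the uniform bound just obtained, it follows that $F(\xi)\in\mathbb D^{1,2}$ and that a subsequence $DF(\xi_{n_k})$ converges weakly to $DF(\xi)$. Since the full sequence already converges strongly to $F'(\xi)D\xi$, the two limits coincide and \rf{chain} follows. The main obstacle is the one flagged in the second paragraph: the composite $F\circ f_n$ inherits only $C^1$-regularity, and since mollification is not available directly on the infinite-dimensional space $E$, one must exploit the cylindrical structure of $\xi_n$ to reduce to a finite-dimensional mollification before the closedness of $D$ can be applied.
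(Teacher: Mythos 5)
Your proof follows the same route as the paper's: approximate $\xi$ by smooth cylindrical random variables, verify the chain rule for these, obtain the uniform bound \rf{asb}, invoke Lemma \ref{le1} to get $F(\xi)\in\mathbb D^{1,2}$ together with a weakly convergent subsequence of $DF(\xi_n)$, and identify that weak limit with the strong limit $F'(\xi)D\xi$ via the identical two-term decomposition (boundedness of $F'$ plus the ideal property for the first term, an a.s.-convergent subsequence plus dominated convergence for the second). The only difference is that you justify $DF(\xi_n)=F'(\xi_n)D\xi_n$ for cylindrical $\xi_n$ by a finite-dimensional mollification, a step the paper dismisses as ``clear''; this is a legitimate filling-in of detail rather than a different approach.
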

\begin{proof}
Take a sequence of smooth random variables $\xi_n$ that converges
to $\xi$ in $\mathbb D^{1,2}$. Clearly, $DF(\xi_n) = F'(\xi_n) D\xi_n$.
By boundedness of the derivative $F'$, $F(\xi_n)\to F(\xi)$ in $L_2(\Om,E)$.
Moreover, $DF(\xi_n)$ satisfy Assumption \rf{asb} by the fact of convergence 
$D\xi_n \to D\xi$
in $L_2(\Om,\gm(\mc H,E))$, by the boundedness of $\|F'(\xi_n)\|_{\mc L(E)}$ 
uniformly in $n$, and by the ideal property of $\gm$-radonifying operators.
By  Lemma \ref{le1}, $F(\xi)\in\mathbb D^{1,2}$.
Moreover, there is a subsequence $\xi_{n_k}$ so that 
$DF(\xi_{n_k})\to DF(\xi)$ weakly in $L_2(\Om,\gm(\mc H,E))$.
On the other hand,
\mmm{
\lb{oo1}
\|F'(\xi_{n_k})D\xi_{n_k} - F'(\xi)D\xi\|_{L_2(\Om,\gm(\mc H,E))}
\lt
\|F'(\xi_{n_k})(D\xi_{n_k}-D\xi)\|_{L_2(\Om,\gm(\mc H,E))} \\
+ \|\big(F'(\xi_{n_k}) - F'(\xi)\big)D\xi\|_{L_2(\Om,\gm(\mc H,E))}.
}
The first term on the right-hand side converges to zero
by the boundedness of $F'$ and by the ideal property of
$\gm(\mc H,E)$. As for the second term,
we can find a further subsequence of $\xi_{n_k}$, 
we denote it again by $\xi_{n_k}$, that converges to $\xi$ a.s.
Then, the second term in \rf{oo1} converges to zero by the boundedness
of $F'$, by Lebesgue's theorem, and, again, by the ideal property of $\gm(\mc H,E)$. 
This proves \rf{chain}.
\end{proof}
We will need the next assumption.
\begin{enumerate}
\item[\textbf{A3}]
The functions $\al: E\to E$, $\sg_i: E\to E$, $i=1,\ldots, n$, 
have bounded Fr\'echet derivatives.
Moreover, $\sg': E \to \gm(H,\mc L(E))$ is continuous. 
\end{enumerate}
\begin{thm}
\lb{thhh2} 
Suppose A3 is fulfilled. Then, $X_t\in \mathbb D^{1,2}$ for all $t\in [0,T]$. 
Moreover, $DX_t\in L_2(\Om\x [0,T],\gm(H,E))$, and
for $r\lt t$,  $D_rX_t$ satisfies the following equation in $\gm(H,E)$:
\mmm{
\label{SDE-MD}
D_r X_t  = e^{(t-r)A}\sg(X_r)  + \int_r^t e^{(t-s)A}\al'(X_s)\, D_rX_s
\, ds\\
+ \int_r^t e^{(t-s)A}\sg'(X_s)\, D_rX_s  \, dW_s.
}
For $r>t$, $D_rX_t = 0$. 
\end{thm}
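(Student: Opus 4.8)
The plan is to differentiate the Picard scheme from the proof of Theorem \ref{thm0}. I set $X^0_t = e^{tA}x$ and $X^{n+1} = \Gm(X^n)$ with $\Gm$ as in \rf{Gam}, so that $X^n\to X$ in $S_2(E)$. First I would show by induction on $n$ that $X^n_t\in\mathbb D^{1,2}$ for every $t$ and compute the kernel $D_rX^n_t$. The base case is immediate because $X^0_t$ is deterministic, so $DX^0 = 0$. For the inductive step I apply $D$ to each term of \rf{Gam}: the semigroup $e^{(t-s)A}$ and the Bochner integral commute with $D$, so that by Lemma \ref{chain-lem} (and by adaptedness, which forces $D_rX^n_s = 0$ for $s<r$) the derivative of the drift term is $\int_r^t e^{(t-s)A}\al'(X^n_s)D_rX^n_s\,ds$; for the stochastic convolution I use the commutation relation between the Malliavin derivative and the It\^o integral, which produces a boundary term together with a stochastic integral of the derivative. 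This yields
\mm{
D_r X^{n+1}_t = e^{(t-r)A}\sg(X^n_r)
+ \int_r^t e^{(t-s)A}\al'(X^n_s)\,D_rX^n_s\,ds \\
+ \int_r^t e^{(t-s)A}\sg'(X^n_s)\,D_rX^n_s\,dW_s
}
for $r\lt t$, and $D_rX^n_t = 0$ for $r>t$.

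Next I would establish the uniform bound $\sup_n \E\|DX^n_t\|^2_{\gm(\mc H,E)} < \infty$ demanded by Lemma \ref{le1}. Since a 2-smooth space has type 2, there is a continuous embedding $L_2([0,T],\gm(H,E)) \zz \gm(\mc H,E)$, so it suffices to control $\int_0^T \E\|D_rX^n_t\|^2_{\gm(H,E)}\,dr$. Taking $\gm(H,E)$-norms in the recursion and using the stochastic-integral estimate in the 2-smooth space $E$ together with the ideal property of $\gm$-radonifying operators, the boundary term is bounded through the linear growth A2 and the $S_2(E)$-bound on $X^n$, while the drift and diffusion terms are bounded using the boundedness of $\al'$ and $\sg'$ from A3. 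Setting $\phi_n(t) = \int_0^t \E\|D_rX^n_t\|^2_{\gm(H,E)}\,dr$, this produces, after integrating in $r$, an inequality of the form $\phi_{n+1}(t) \lt c_1 + c_2\int_0^t \phi_n(s)\,ds$, and iterating it as in Theorem \ref{thm0} gives $\sup_n \sup_{t\in[0,T]} \phi_n(t) < \infty$.

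By Lemma \ref{le1}, applicable because $E$ is reflexive, the uniform bound together with $X^n_t\to X_t$ in $L_2(\Om,E)$ gives $X_t\in\mathbb D^{1,2}$, the integrability $DX_t\in L_2(\Om\x[0,T],\gm(H,E))$, and a subsequence along which $DX^{n_k}_t\to DX_t$ weakly. It then remains to identify $D_rX_t$ as a solution of \rf{SDE-MD}, and I expect this to be the main obstacle: Lemma \ref{le1} supplies only weak convergence, which does not pass through the nonlinear coefficients $\al'(X^n)D_rX^n$ and $\sg'(X^n)D_rX^n$. To circumvent this I would argue strong convergence of the derivative iterates directly. For each fixed $r$, equation \rf{SDE-MD} is linear in the unknown with bounded coefficients and a given inhomogeneous term $e^{(t-r)A}\sg(X_r)$; since $\gm(H,E)$ is itself 2-smooth, it has a unique solution $U_r(\fdot)$ by the contraction argument of Theorem \ref{thm0}. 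A Gronwall estimate for $D_rX^n_t - U_r(t)$ — using the boundedness of $\al',\sg'$, the continuity of $\sg'$ from A3, the convergence $X^n\to X$ in $S_2(E)$, and the ideal property — then shows $DX^n\to U$ strongly in $L_2(\Om\x[0,T],\gm(H,E))$. The strong limit must agree with the weak limit, so $D_rX_t = U_r(t)$ satisfies \rf{SDE-MD}, while $D_rX_t = 0$ for $r>t$ is inherited from the iterates by adaptedness.
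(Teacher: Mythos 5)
Your proposal follows the paper's skeleton almost exactly up to the identification step: both differentiate the Picard iterates $X^{(n+1)}=\Gm(X^{(n)})$, establish by induction the recursive formula for $D_rX^{(n+1)}_t$ (the paper does this more painstakingly, via simple-function approximations of the convolution integrands and induction hypotheses on mean-square path continuity and fourth moments of $D_rX^{(n)}_t$, which is the justification your phrase ``commutation relation between the Malliavin derivative and the It\^o integral'' is standing in for), derive the uniform bound $\sup_n\E\|DX^{(n)}_t\|^2_{\gm(\mc H,E)}<\infty$ through the type-2 embedding $L_2([0,T],\gm(H,E))\hookrightarrow\gm(\mc H,E)$ and Gronwall, and invoke Lemma \ref{le1} to get $X_t\in\mathbb D^{1,2}$. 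Where you genuinely diverge is in showing that $D_rX_t$ satisfies \rf{SDE-MD}: the paper scalarizes, applying $y'\in E^*$ and then $D_r$ to \rf{mild_SDE} and quoting the commutation result of Carmona--Tehranchi (Proposition 5.4) for the Malliavin derivative of a stochastic integral, then lifts back to $\gm(H,E)$ since the identity holds for all $y'$; you instead note that \rf{SDE-MD} is a linear equation with a unique solution $U_r(\fdot)$ in the 2-smooth space $\gm(H,E)$ and prove strong convergence $D_rX^{(n)}_t\to U_r(t)$ by a Gronwall comparison, using boundedness of $\al',\sg'$, continuity of $\sg'$, a.s.\ convergence of a subsequence of $X^{(n)}$, and the ideal property. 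Both routes are sound. Yours has the advantage of being self-contained (no external commutation theorem for the limit equation is needed) and of delivering, as a byproduct, that $DX_t$ is genuinely an element of $L_2(\Om\x[0,T],\gm(H,E))$ that can be evaluated at $r$ --- a point the paper has to argue separately through the embedding. The price is that the Gronwall comparison must be iterated in the Picard index (the recursion gives $\E\|D_rX^{(n+1)}_t-U_r(t)\|^2\lt\eps_n+C\int_r^t\E\|D_rX^{(n)}_s-U_r(s)\|^2ds$ with $\eps_n\to0$, and you need the factorial gain $(CT)^m/m!$ together with the uniform bound to close it), and that convergence of the coefficient terms $(\al'(X^{(n)})-\al'(X))U_r$ tacitly uses continuity of $\al'$, which A3 states only for $\sg'$; this is a cosmetic strengthening of the hypotheses rather than a gap, since the paper itself needs the analogous continuity in Lemma \ref{chain-lem}.
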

\begin{proof}
First we note that $\gm(H,E)$ is a 2-smooth Banach space, and, therefore,
\rf{SDE-MD} is well-defined. 
We construct iterations by setting $X^{(0)}_t = x$,
and $X^{(n+1)}_t = \Gamma(X^{(n)}_t)$, where $\Gm$ is defined by \rf{Gam}. 
Notice that each successive iteration $X^{(n)}_t$ has a continuous version,
since, by the results of \cite{neer4}, the stochastic
convolution process has a continuous version.
We are going to prove by induction on $n$ that all successive iterations 
$X^{(n)}_t$ are in the domain $\mathbb D^{1,2}$. 
Clearly, $X^{(0)}_t\in \mathbb D^{1,2}$, and $D X^{(0)}_t=0$.
As the induction hypothesis, we assume the following:
1) $X^{(n)}_t \in \mathbb D^{1,2}$, 2) $DX^{(n)}_t\in L_2(\Om\x [0,T],\gm(H,E))$,
3) for each fixed $r>0$
the path of $D_rX^{(n)}_t$ is uniformly continuous on $[r,T]$ in the mean-square sense,
4) $D_rX^{(n)}_t = 0$ for $r>t$, 5) $\E\|D_r X^{(n)}_t\|^4_{\gm(H,E)}$ is bounded. 
Note that, 
by the induction hypothesis, we can evaluate $DX^{(n)}_t$ at any point $r\in [0,T]$,
and write $D_rX^{(n)}_t$ for this evaluation.
Let us prove these statements for $n+1$.
We start by showing the relation:
\aaa{
\lb{epp1}
D_rX^{(n+1)}_t = e^{(t-r)A}\sg(X^{(n)}_r) + \int_r^t e^{(t-s)A}\al'(X^{(n)}_s)D_rX^{(n)}_s\, ds\notag\\
+ \int_r^t e^{(t-s)A}\sg'(X^{(n)}_s)D_rX^{(n)}_s \, dW_s.
}
For this, we need to prove that
\aaa{
\lb{MD-sto}
&D_r \int_0^t e^{(t-s)A}\sg(X^{(n)}_s)\, dW_s = 
e^{(t-r)A}\sg(X^{(n)}_r) +
\int_r^t e^{(t-s)A}\sg'(X^{(n)}_s)D_rX^{(n)}_s \, dW_s\\
&\text{and}\notag\\
\lb{MD-riem}
&D_r  \int_0^t e^{(t-s)A}\al(X^{(n)}_s) \, ds
=
\int_r^t e^{(t-s)A}\al'(X^{(n)}_s)\, D_rX^{(n)}_s \, ds.
}
Note that the stochastic integral on the right-hand side of \rf{MD-sto} is well-defined.
Indeed, since $D_rX^{(n)}_s$ takes values in $\gm(H,E)$,
then, by A3, the integrand of the stochastic integral takes values 
in $\gm(H,\gm(H,E))$.
This implies (see \cite{neer1},\cite{brz}) that the stochastic integral in \rf{MD-sto}
is in $L_2(\Om,\gm(H,E))$, and, moreover, that there exists a constant $C>0$ so that 
\mm{
\Big\|\int_r^t e^{(t-s)A}\sg'(X^{(n)}_s)D_rX^{(n)}_s \, dW_s\Big\|_{\gm(H,E)}^2
\\ \lt C \int_r^t\|e^{(t-s)A}\sg'(X^{(n)}_s)D_rX^{(n)}_s\|^2_{\gm(H,\gm(H,E))}\, ds.
}
To prove \rf{MD-sto} and \rf {MD-riem}, suppose first that $r>t$.
Fix a partition $\mc P= \{0=t_0<t_1<\dots <t_N = t\}$ and
consider a simple integrand of the form
\aaa{
\lb{simple-int}
\sg_N(X^{(n)},s) = \sum_{i=1}^N  e^{(t-t_{i})A}\sg(X^{(n)}_{t_i})\ind_{(t_{i-1},t_i]}(s).
}
Note that $\sg_N(X^{(n)},s)$ converges to $\sg(X^{(n)}_s)$ in the mean-square sense
which is implied by the uniform continuity of paths of $X^{(n)}_s$ in the $L_2(\Om,E)$-norm.
The latter uniform continuity is implied by the relation $X^{(n)} = \Gm(X^{(n-1)})$,
where $\Gm$ is defined by \rf{Gam}, and by the fact that $\E\|X^{(n)}_t\|^2_E$
is bounded uniformly in $n$ and $t\in [0,T]$ which follows from the same relation 
and the usual stochastic integral estimates.
Then, from Lemma \ref{chain-lem} and from the equality $D_r (W_te_i) = e_i\ind_{[0,t]}(r)$, 
it follows that $D_r\int_0^t \sg_N(X^{(n)},s)dW_s = 0$ if $D_rX^{(n)}_t=0$.
By taking the limit as the mesh of $\mc P$ goes to 0, 
we obtain that $D_r \int_0^t e^{(t-s)A}\sg(X^{(n)}_s)dW_s = 0$.
Analogously, $D_r  \int_0^t e^{(t-s)A}\al(X^{(n)}_s) ds =0$ if $D_rX^{(n)}_t=0$.
This proves that  for $r>t$, $D_rX^{(n+1)}_t=0$.
Now take an $r\lt t$ and fix a partition $\mc P= \{0=t_0<t_1<\dots <t_N = t\}$ containing $r$.
We have:
\aaa{
\lb{conv-study1}
D_r \int_0^t \sg_N(X^{(n)},s)\, dW_s = 
e^{(t-r)A}\sg(X^{(n)}_r) +
\int_r^t D_r\sg_N(X^{(n)},s) \, dW_s,
}
where $D_r\sg_N(X^{(n)},s)$ is computed using \rf{simple-int}. 
The right-hand side of the above relation, considered as a function of $\om$ and $r$, 
converges to the right-hand side of \rf{MD-sto} in $L_2(\Om,\gm(H,E))$ 
pointwise in $r\in [0,t]$.
Indeed, there exists a constant $\gm>0$ so that
\mm{
\E\Big\|\int_r^t e^{(t-s)A}\sg'(X^{(n)}_s)D_rX^{(n)}_s \, dW_s -\int_r^t D_r\sg_N(X^{(n)},s) \, dW_s\Big\|^2\\
\lt \gm\Big[\Big(\sum_{i=1}^N\int_{t_{i-1}}^{t_i} \hspace{-2mm}\E\|e^{(t-s)A}\sg'(X^{(n)}_s) - e^{(t-t_i)A}\sg'(X^{(n)}_{t_i})\|^4 ds\Big)^\frac12
\hspace{-1mm}\Big(\int_r^t \hspace{-1mm}\E\|D_rX^{(n)}_s\|^4 ds\Big)^\frac12\\
+ \sum_{i=1}^N\int_{t_{i-1}}^{t_i}\E \|D_rX^{(n)}_s- D_rX^{(n)}_{t_i}\|^2\, ds\Big].
}
The right-hand side of the above inequality converges to 
zero by the uniform continuity of paths of $X^{(n)}_s$, Lebesgue's
theorem, and the induction hypothesis.
The convergence of the right-hand side of \rf{conv-study1} 
to the right-hand side of \rf{MD-sto}
holds also in $L_2(\Om\x [0,T],\gm(H,E))$, and,
therefore, in $L_2(\Om,\gm(\mc H,E))$ by the canonical embedding of
$L_2([0,T],\gm(H,E))$ into $\gm(\mc H,E)$ for type 2 Banach spaces (see \cite{neer1}).
Thus, equality
\rf{MD-sto} will be implied by It\^o's isometry, by the continuity of paths,
and by the closedness of the Malliavin derivative operator.
Equality \rf{MD-riem} follows from similar arguments.
Therefore, $X^{(n+1)}_t \in \mathbb D^{1,2}$,
$DX^{(n+1)}_t\in L_2(\Om\x [0,T],\gm(H,E))$, and 
relation \rf{epp1} holds.
This relation implies that the paths of $D_rX^{(n+1)}_t$
are continuous in the mean-square sense on $[r,T]$.
The same relation and the maximal inequality for
stochastic convolutions, proved in \cite{neer4}, imply
that $\E\|D_rX^{(n+1)}_t\|^4$ is bounded.
This completes the induction argument.

%
Now we would like to prove \rf{asb} for $\xi_n = X^{(n)}_t$.
Relation \rf{epp1} implies the estimate:
\aa{
\E\|D_rX^{(n+1)}_t\|^2_{\gm(H,E)} \lt K\big(1+ \int_r^t \E\|D_rX^{(n)}_s\|^2_{\gm(H,E)}\big),
}
where $K>0$ is a constant which does not depend on $r$. 
This implies that for all $n$
\aaa{
\lb{e1}
\E\|D_rX^{(n)}_t\|^2_{\gm(H,E)} \lt Ke^{KT}.
}
Integrating \rf{e1} from $0$ to $T$ and using the fact of the canonical
embedding of $L_2([0,T],\gm(H,E))$ into $\gm(\mc H,E)$ 
we obtain that $D_rX^{n}_t$ takes values in $\gm(\mc H,E)$, and
\aaa{
\lb{e2}
\E\|DX^{(n)}_t\|^2_{\gm(\mc H,E)} \lt J\,K\,\E\int_0^T  \E\|D_rX^{(n)}_t\|^2_{\gm(H,E)}\, dt
\lt J\, K\, T\, e^{KT}
}
where $J>0$ is the embedding constant.  
By the results of Section \ref{existence}, $X^{(n)}_t\to X_t$ in $L_2(\Om,E)$.
Hence, by Lemma \ref{le1}, $X_t\in \mathbb D^{1,2}$,
and, moreover, there is a weakly convergent subsequence $DX^{(n_k)}_t\to DX_t$.
By \rf{e2}, this subsequence contains a further subsequence which converges
in $L_2(\Om \x [0,T],\gm(H,E)))$, say, to an element $\zeta$. Then, again
by the canonical embedding of $L_2([0,T],\gm(H,E))$ into $\gm(\mc H,E)$,
$\zeta=DX_t$. The latter implies that that we can evaluate $DX_t$ at $r\in [0,T]$,
and, moreover, $D_rX_t$ takes values in $\gm(H,E)$.

It remains to show \rf{SDE-MD}.
Take a $y'\in E^*$, and apply the functional $y'$, and then the operator $D_r$, to the both parts of \rf{mild_SDE}.
Using the result from \cite{Carmona} on the Malliavin derivative of the stochastic
integral (Proposition 5.4), for $r\lt t$ we obtain:
\aa{
\<D_rX_t,y'\> = \<e^{(t-r)A}\sg(X_r),y'\>  + \int_r^t \<e^{(t-s)A}\al'(X_s)\, D_rX_s,y'\>
\, ds\\
+ \int_r^t \<e^{(t-s)A}\sg'(X_s)\, D_rX_s  \, dW_s,y'\>.
}
This implies \rf{SDE-MD} since the above equation holds for all $y'\in E^*$, and  
\rf{SDE-MD} is a well-defined $\gm(H,E)$-valued stochastic evolution equation. 
If $r>t$, then $D_rX_t=0$ since it is the limit of  $DX^{(n_k)}_t$
which is zero for $r>t$.
\end{proof}

\section{Differentiability with respect to the initial data}
\lb{Diff}
Consider the equation
\aaa{
\lb{3}
Y_t = e^{tA} + \int_0^t e^{(t-s)A}\al'(X_s))Y_s\, ds + \int_0^t e^{(t-s)A}\sg'(X_s)\,Y_s\, dW_s.
}
which is obtained by formal differentiation of \rf{mild_SDE} with respect to the initial data,
and is written with respect to the derivative operator $Y_s$.
We would like to prove the existence of a solution to \rf{3} in the space of bounded
operators. We need the assumptions below.
\begin{enumerate}
\item[\textbf{A4}] 
$\al'(x)$ is bounded in $\mc L(E)$ and
$\gm(H,E)$, and $\sg'(x)$ is bounded in 
$\gm(H,\gm(H,E))$ and $\gm(H,\mc L(E))$.
\item[\textbf{A5}]
The restriction of the semigroup $e^{tA}$ to $H$ is a semigroup on $H$.
\end{enumerate}
For simplicity, we will use the same notations, i.e. $\al'(x)$, $\sg'(x)$, $e^{tA}$, 
for the restrictions to $H$.
\begin{thm}
 \label{th1}
Suppose A3, A4, and A5 are fulfilled. 
Then the solution $X_t(x)$ to \eqref{mild_SDE}
is Fr\'echet differentiable along $H$ with respect to the initial data $x$. 
The derivative operator $Y_t$ takes the form
$Y_t = e^{tA} + V_t$ where $V_t$ takes values in 
$\gm(H,E)$. Moreover, $Y_t$ is a solution to \rf{3},
and possesses a continuous path modification.
\end{thm}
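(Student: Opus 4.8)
The plan is to realize the Fr\'echet derivative $Y_t$ as a solution to the linear equation \rf{3} and to show that it splits as $Y_t = e^{tA} + V_t$ with $V_t$ taking values in the \emph{smaller} space $\gm(H,E)$. The strategy mirrors the existence result of Theorem \ref{thm0}, but with one essential twist: because $e^{tA}$ as a map $H\to E$ (or $H\to H$) is only a bounded operator and \emph{not} $\gm$-radonifying in general, I cannot search for $Y_t$ directly in a space of $\gm$-radonifying operators. Instead I would substitute the ansatz $Y_t = e^{tA} + V_t$ into \rf{3}, use the semigroup property $e^{(t-s)A}e^{sA}=e^{tA}$, and derive the equation that $V_t$ must satisfy, namely
\aaa{
\lb{Veq}
V_t = \int_0^t e^{(t-s)A}\al'(X_s)\bigl(e^{sA}+V_s\bigr)\, ds
+ \int_0^t e^{(t-s)A}\sg'(X_s)\bigl(e^{sA}+V_s\bigr)\, dW_s.
}
The point of this reformulation is that, by A4 and the ideal property of $\gm$-radonifying operators, the inhomogeneous terms $\al'(X_s)e^{sA}$ and $\sg'(X_s)e^{sA}$ already land in $\gm(H,E)$ and $\gm(H,\gm(H,E))$ respectively, so the integral operator defining the right-hand side of \rf{Veq} genuinely maps $\gm(H,E)$-valued processes to $\gm(H,E)$-valued processes.

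First I would verify that $\gm(H,E)$ is itself a 2-smooth Banach space (this is noted at the start of the proof of Theorem \ref{thhh2}), so that the theory of Section \ref{existence} applies verbatim with $E$ replaced by $G = \gm(H,E)$. Then I would set up the contraction argument for \rf{Veq} in $S_2(\gm(H,E))$: define the affine map $\Phi$ sending $V$ to the right-hand side of \rf{Veq}, check it is well-defined using A4 (boundedness of $\al'$ in $\gm(H,E)$ and of $\sg'$ in $\gm(H,\gm(H,E))$) together with the stochastic-convolution estimate from \cite{brz}, and then show that a suitable iterate $\Phi^n$ is a contraction exactly as in Theorem \ref{thm0}, using the factor $K^nT^n/n!$. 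This yields a unique $V_t\in S_2(\gm(H,E))$, hence a unique $Y_t = e^{tA}+V_t$ solving \rf{3}, and the continuous path modification follows from the maximal inequality for stochastic convolutions in \cite{neer4}, as in Theorem \ref{thm0}.

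The genuinely substantive step — and the one I expect to be the main obstacle — is to prove that this $Y_t$ is \emph{actually} the Fr\'echet derivative of $x\mapsto X_t(x)$ along $H$, rather than merely the solution of the formally-differentiated equation. For this I would form the difference quotient $\bigl(X_t(x+h)-X_t(x)\bigr)/\|h\|_H$ for $h\in H$ and estimate its distance in $S_2(E)$ from $Y_t h$. Subtracting the mild equations for $X_t(x+h)$ and $X_t(x)$ and the equation for $Y_th$, and using A3 (continuity of $\sg'$ and boundedness of the Fr\'echet derivatives of $\al,\sg$), the increments $\al(X_s(x+h))-\al(X_s(x))$ and the analogous $\sg$-increment are approximated by $\al'(X_s)(X_s(x+h)-X_s(x))$ and $\sg'(X_s)(X_s(x+h)-X_s(x))$ up to a $o(\|h\|_H)$ remainder; a Gronwall-type argument applied to $\E\|X_t(x+h)-X_t(x)-Y_th\|_E^2$ then forces this quantity to be $o(\|h\|_H^2)$. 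The delicate points here are controlling the remainder uniformly in $s\in[0,T]$ (using continuity of $\sg'$ and dominated convergence, as in the proof of Lemma \ref{chain-lem}) and ensuring that differentiation is taken along $H$ so that the inhomogeneous terms remain in $\gm(H,E)$; restricting $h$ to $H$ and invoking A5 (so that $e^{tA}$ restricts to a semigroup on $H$) is precisely what keeps the whole construction inside the $\gm$-radonifying framework.
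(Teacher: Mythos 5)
Your proposal is correct and follows the same core strategy as the paper: the ansatz $Y_t=e^{tA}+V_t$, the observation that by A4, A5 and the ideal property of $\gm$-radonifying operators the inhomogeneous terms $e^{(t-s)A}\al'(X_s)e^{sA}$ and $e^{(t-s)A}\sg'(X_s)e^{sA}$ take values in $\gm(H,E)$ and $\gm(H,\gm(H,E))$ respectively, and a Picard/contraction argument for the resulting equation for $V_t$ (your equation is exactly the paper's \rf{add4}) in the space $S_2(\gm(H,E))$, with the continuous modification coming from the maximal inequality of \cite{neer4}. The one place where you genuinely diverge is the identification of $Y_t$ with the Fr\'echet derivative along $H$. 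The paper introduces the auxiliary $E$-valued equation \rf{fre-d} for the directional process $\xi_t$ with initial datum $y\in H$, observes that $Y_ty$ also solves \rf{fre-d}, and concludes $Y_ty=\xi_t$ by uniqueness in $S_2(E)$ and path continuity; the convergence of the difference quotients themselves is left implicit in that identification. You instead estimate $\E\|X_t(x+h)-X_t(x)-Y_th\|_E^2$ directly via a Taylor-remainder-plus-Gronwall argument. Your route makes explicit the analytic content that the paper's uniqueness step presupposes, at the cost of having to control the remainder uniformly: since A3 only gives continuity (not uniform continuity) of $\sg'$, you need the a priori bound $\E\|X_s(x+h)-X_s(x)\|_E^2\lt C\|h\|_H^2$ together with dominated convergence to get an $o(\|h\|_H)$ remainder uniform over directions in $H$, as you indicate. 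Both routes lead to the stated conclusion; yours is the more self-contained on the differentiability claim, while the paper's is shorter because it delegates everything to the uniqueness of \rf{fre-d}.
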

\begin{proof}
First, consider the equation in $E$:
\aaa{
\lb{fre-d}
\xi_t = e^{tA}y + \int_0^t e^{(t-s)A}\al'(X_s)\xi_s\, ds + \int_0^t e^{(t-s)A}\sg'(X_s)\,\xi_s\, dW_s.
}
Since the derivatives $\al'(x)$ and $\sg'(x)$ are bounded
uniformly in $x\in E$, the proof of the existence of the solution
and its continuous path modification is the same 
as in Theorem \ref{thm0}.

Consider the operator $V_t = Y_t - e^{tA}$, and rewrite \rf{3} with respect to
$V_t$:
\mmm{
\lb{add4}
V_t = \int_0^t e^{(t-s)A}\al'(X_s))V_s\, ds + 
\int_0^t e^{(t-s)A}\sg'(X_s)\,V_s\, dW_s\\
+ \int_0^t e^{(t-s)A}\al'(X_s)\,e^{sA}\, ds
+ \int_0^t e^{(t-s)A}\sg'(X_s)\, e^{sA}\, dW_s.
}
Note that by Assumptions 4 and 5 and by the ideal property of $\gm(H,E)$,
$e^{(t-s)A}\al'(X_s)\, e^{sA}$ takes values in $\gm(H,E)$, and
$e^{(t-s)A}\sg'(X_s)\, e^{sA}$ takes values in $\gm(H,\gm(H,E))$. 
This, in particular, follows from the fact that $e^{(t-s)A}$, as a bounded operator $E\to E$, 
can be also regarded as
a bounded operator $\gm(H,E)\to\gm(H,E)$
whose norm is not bigger than $\|e^{(t-s)A}\|_{\mc L(E)}$. 
Therefore, as in the proof
of Theorem \ref{thhh2}, the stochastic integral $\int_0^t e^{(t-s)A}\sg'(X_s)\, e^{sA}\, dW_s$
takes values in $\gm(H,E)$, and
\aa{
\E\Big\|\int_0^t e^{(t-s)A}\sg'(X_s)\,e^{sA}\, dW_s\Big\|_{\gm(H,E)}^2
\lt C\, \E \int_0^t \|e^{(t-s)A}\sg'(X_s)\,e^{sA}\|^2_{\gm(H,\gm(H,E))}\, ds,
}
where $C>0$ is a constant.
Hence, the last two terms in \rf{add4} are bounded in $L_2(\Om,\gm(H,E))$.
The existence of a solution to \rf{add4} can be proved 
in the space $S_2(\gm(H,E))$ defined in Section \ref{existence}, 
in exactly the same way as we proved the existence of the solution to 
\rf{mild_SDE}. Moreover, the solution $V_t$ to \rf{add4} is unique 
and possesses a continuous path modification.
Equation \rf{add4} also implies that the process
$Y_t = e^{tA} + V_t$ solves \rf{3}. Indeed, 
both terms
$e^{(t-s)A}\sg'(X_s)\,V_s$ and $e^{(t-s)A}\sg'(X_s)\, e^{sA}$
take values in $\gm(H,\gm(H,E))$, and, therefore,
the stochastic integral   
$\int_0^t e^{(t-s)A}\sg'(X_s)\,Y_s\, dW_s$ is well-defined.
Hence, $Y_t$ verifies \rf{3}.

Now take a $y\in H$, and apply the both parts of \rf{3} to $y$.
We obtain that $Y_ty$ verifies \rf{fre-d}. But the solution to
\rf{fre-d} is unique in $S_2(E)$. From this and from
the continuity of paths it follows that for all $t\in [0,T]$
$Y_t y = \xi_t$ a.s. Therefore, $Y_t$ is the Fr\'echet derivative
of $X_t(x)$ with respect to $x$ along the space $H$.
\end{proof}
\section{The right inverse operator}
\lb{inverse}
In this section, under some additional assumptions, 
we prove the existence of the right inverse operator to $Y_t$.
We will need Lemma \ref{trace} below, proved in \cite{Ito_UMD}.
\begin{lem}
\label{trace}
Let $E$, $F$, $G$ be Banach spaces, and let
$\{e_n\}$ be an orthonormal basis of $H$.
Let $R\in \gm (H,E)$, $S\in \gm(H,F)$, and
$T \in  \mc L(E, \mc L(F,G))$. Then the sum
\aa{
\Tr_{R,S} T = \sum_{n=1}^\infty(TRe_n)(Se_n)
}
converges in $G$, does not depend on the choice
of the orthonormal basis, and
\aa{
\|\Tr_{R,S}T\|_G \lt \|T\|_{\mc L(E, \mc L(F,G))}
\|R\|_{\gm(H,E)}\|S\|_{\gm(H,F)}.
}
\end{lem}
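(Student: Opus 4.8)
The plan is to represent $\Tr_{R,S}T$ as the expectation of a Gaussian bilinear form and then read off both the convergence and the norm estimate from the isometric description of the $\gm$-radonifying norm. Let $(g_n)_{n=1}^\infty$ be a sequence of independent standard Gaussian random variables on a probability space $(\Om,\mc F,\PP)$, and set $\xi=\suml_{n=1}^\infty g_n Re_n$ and $\eta=\suml_{n=1}^\infty g_n Se_n$, using the \emph{same} Gaussians in both series. By the definition of the $\gm$-radonifying norm (see \cite{neer1}), these series converge in $L_2(\Om,E)$ and $L_2(\Om,F)$, respectively, and $\|\xi\|_{L_2(\Om,E)}=\|R\|_{\gm(H,E)}$, $\|\eta\|_{L_2(\Om,F)}=\|S\|_{\gm(H,F)}$. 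The bilinear map $B:E\x F\to G$, $B(x,y)=(Tx)(y)$, is bounded with $\|B(x,y)\|_G\lt\|T\|_{\mc L(E,\mc L(F,G))}\|x\|_E\|y\|_F$, so $(T\xi)(\eta)=B(\xi,\eta)$ defines an element of $L_1(\Om,G)$.

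First I would compute the partial sums. Writing $\xi_N=\suml_{n=1}^N g_n Re_n$ and $\eta_N=\suml_{n=1}^N g_n Se_n$, bilinearity of $B$ together with $\E[g_n g_m]=\dl_{nm}$ gives
\[
\E\,B(\xi_N,\eta_N)=\suml_{n,m=1}^N\E[g_ng_m]\,(TRe_n)(Se_m)=\suml_{n=1}^N(TRe_n)(Se_n),
\]
so the $N$-th partial sum of the trace series coincides with $\E\,B(\xi_N,\eta_N)$. Next I would show that $B(\xi_N,\eta_N)\to B(\xi,\eta)$ in $L_1(\Om,G)$: splitting $B(\xi_N,\eta_N)-B(\xi,\eta)=B(\xi_N-\xi,\eta_N)+B(\xi,\eta_N-\eta)$ and applying the Cauchy--Schwarz inequality together with the bilinear bound yields
\[
\E\|B(\xi_N,\eta_N)-B(\xi,\eta)\|_G\lt\|T\|\big(\|\xi_N-\xi\|_{L_2(\Om,E)}\|\eta_N\|_{L_2(\Om,F)}+\|\xi\|_{L_2(\Om,E)}\|\eta_N-\eta\|_{L_2(\Om,F)}\big),
\]
which tends to $0$ by the $L_2$-convergence of the defining series. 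Hence $\E\,B(\xi_N,\eta_N)\to\E\,B(\xi,\eta)$, the trace series converges, and $\Tr_{R,S}T=\E[(T\xi)(\eta)]$.

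The norm estimate then follows directly: $\|\Tr_{R,S}T\|_G=\|\E\,B(\xi,\eta)\|_G\lt\E\|B(\xi,\eta)\|_G\lt\|T\|_{\mc L(E,\mc L(F,G))}\,\E[\|\xi\|_E\|\eta\|_F]$, and one more application of the Cauchy--Schwarz inequality bounds this by $\|T\|_{\mc L(E,\mc L(F,G))}\|\xi\|_{L_2(\Om,E)}\|\eta\|_{L_2(\Om,F)}=\|T\|_{\mc L(E,\mc L(F,G))}\|R\|_{\gm(H,E)}\|S\|_{\gm(H,F)}$. For the independence of the basis I would observe that $h\mto(Rh,Sh)$ is a $\gm$-radonifying operator $H\to E\x F$, so the joint law of the $E\x F$-valued Gaussian sum $\suml_n g_n(Re_n,Se_n)=(\xi,\eta)$ does not depend on the orthonormal basis $\{e_n\}$; consequently neither does $\E[(T\xi)(\eta)]=\Tr_{R,S}T$.

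I expect the only genuine technical point to be the $L_1(\Om,G)$-convergence of the bilinear expressions $B(\xi_N,\eta_N)$. This is precisely what forces the use of the \emph{same} Gaussian sequence in both series, so that the off-diagonal terms cancel in expectation and only the diagonal $\suml_n(TRe_n)(Se_n)$ survives; it is dispatched by the two Cauchy--Schwarz estimates above. Everything else is bookkeeping built on the isometric property $\|\cdot\|_{L_2(\Om,\cdot)}=\|\cdot\|_{\gm(H,\cdot)}$ of the Gaussian sum.
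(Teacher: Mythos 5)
Your proof is correct. Note that the paper itself does not prove this lemma at all---it only cites \cite{Ito_UMD}---and your Gaussian-randomization argument (representing $\Tr_{R,S}T$ as $\E[(T\xi)(\eta)]$ for the coupled Gaussian sums $\xi=\sum_n g_nRe_n$ and $\eta=\sum_n g_nSe_n$, with convergence, basis-independence, and the norm bound read off from bilinearity, Cauchy--Schwarz, and the isometry $\|R\|_{\gm(H,E)}=\|\xi\|_{L_2(\Om,E)}$) is essentially the standard proof given in that reference.
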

Note that by this lemma, for each $x\in E$,
the sum $\Sg(x) = \sum_{i=1}^\infty \sg_i'(x)\sg_i'(x)$
converges in $\mc L(E,H)$  provided
that $\sg'(x)\in\gm(H,\mc L(E,H))$. Indeed,
\aa{
\|\Sg(x)\|_{\mc L(E,H)} 
= \big\|\sum_{i=1}^\infty \sg'(x)e_i\sg'(x)e_i\big\|_{\mc L(E,H)}
\lt \|\sg'(x)\|^2_{\gm(H, \mc L(E,H))}.
}
We will make additional assumptions:
\bi
\item[\textbf{A6}]
$e^{TA}:E\to E$ is an injectve map.
\item[\textbf{A7}]
There exists a Hilbert space $\td H$ containing $E$ as a subspace,
so that the canonical embedding of $E$ into $\td H$ is continuous, and
for all $x\in E$, $\sg'_i(x)$, $i=1,2,\ldots$, can be extended to $\td H$.
Moreover, each $\sg'_i(x)$ 
maps $e^{tA}\td H$ to $e^{tA}H$ for all $t\in [0,T]$, and 
for some constant $C>0$,
\aa{
\|e^{-TA} \sg'(x) e^{TA}\|_{\mc L_2(H,\mc L_2(\td H, H))}<C,
}
where $\mc L_2(H_1,H_2)$ denotes the space of the Hilbert-Schmidt operators
from a Hilbert space $H_1$ to another Hilbert space $H_2$.
\item[\textbf{A8}] For all $x\in E$, $\al'(x)$ 
maps $e^{tA}E$ to $e^{tA}H$ for all $t\in [0,T]$,
so that 
\aa{
\|e^{-TA} \big(\Sg(x)-\al'(x)\big) e^{TA}\|_{\mc L(E,H)} < C.
}
\ei
Since, by A6, all the operators $e^{tA}$ are injective maps $E\to E$, as well as $H\to H$ due to A5, 
one can speak
about the inverse operator $e^{-tA}$, in general unbounded, on $e^{tA}E$. 
Consider the equation
\eee{
\label{mild-adjoint}
Z_te^{tA} = I + \int_0^t Z_s \bigl(
\Sg(X_s)-\al'(X_s)\bigr)e^{sA}\, ds 
-\int_0^t Z_s\sg'(X_s) e^{sA}\, dW_s
}
which is obtained by a formal derivation of an SDE for $Y_t^{-1}$
and multiplying the both parts by $e^{tA}$ from the
right. Introducing the operator $R_t = Z_t e^{tA}$, we obtain the SDE for $R_t$:
\aaa{
\label{mild-adjoint-1}
R_t = I + \int_0^tR_s e^{-sA} \bigl(
\Sg(X_s)-\al'(X_s)\bigr)e^{sA}\, ds 
-\int_0^tR_s e^{-sA}\sg'(X_s) e^{sA}\, dW_s.
}
\begin{thm}
\label{th-h1}
Let Assumptions A5--A8 be fulfilled. 
Then, equation \eqref{mild-adjoint-1}
has a unique solution of the form $R_t = I + U_t$ where $U_t$ is $\mc L(E,H)$-valued.
Moreover, the operator $Z_t = R_te^{-tA}$, defined on $e^{tA}E$, is the 
right inverse to $Y_t$.
\end{thm}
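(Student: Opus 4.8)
The plan is to reproduce, for the unknown $U_t=R_t-I$, the contraction-mapping scheme of Theorems \ref{thm0} and \ref{th1}, and then to identify $Z_t=R_te^{-tA}$ with the right inverse of $Y_t$ by an It\^o product-rule computation engineered to cancel against the definition of $\Sg$. Substituting $R_t=I+U_t$ into \rf{mild-adjoint-1} and abbreviating $b(s)=e^{-sA}\bigl(\Sg(X_s)-\al'(X_s)\bigr)e^{sA}$ and $c(s)=e^{-sA}\sg'(X_s)e^{sA}$ turns \rf{mild-adjoint-1} into the affine equation
\aa{
U_t=\Phi_t+\int_0^t U_s\,b(s)\,ds-\int_0^t U_s\,c(s)\,dW_s,\qquad
\Phi_t=\int_0^t b(s)\,ds-\int_0^t c(s)\,dW_s.
}

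The first task is to place these terms correctly. By A8 the operator $b(s)$ lies in $\mc L(E,H)$ with norm bounded uniformly on $[0,T]$, so the drift is a Bochner integral in the Banach space $\mc L(E,H)$, for which no smoothness is needed. By A7 the operator $c(s)$ lies in $\mc L_2(H,\mc L_2(\td H,H))=\gm(H,\mc L_2(\td H,H))$, whose target $\mc L_2(\td H,H)$ is a Hilbert space and hence $2$-smooth; therefore the stochastic integrals against $c(s)$ are well defined and take values in $\mc L_2(\td H,H)$, which embeds continuously into $\mc L(E,H)$ because $E\zz\td H$. Since $b(s)$ and $c(s)$ have range in $H$, on which $U_s$ acts boundedly, post-composition by $U_s$ preserves both the Bochner and the $\gm$-radonifying structure; the operator series $\Sg(x)=\sum_i\sg_i'(x)\sg_i'(x)$ and those entering the quadratic variation converge by Lemma \ref{trace}. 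Thus the right-hand side defines a map $\Psi$ of $S_2(\mc L(E,H))$ into itself, and exactly as in Theorem \ref{thm0}, A7--A8 together with the usual stochastic-integral estimates bound the $S_2$-Lipschitz constant of $\Psi^n$ by $\frac{K^nT^n}{n!}$. Hence some iterate is a contraction and \rf{mild-adjoint-1} has a unique solution $R_t=I+U_t$ with $U_t$ valued in $\mc L(E,H)$.

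It then remains to show that $Z_t=R_te^{-tA}$, well defined on $e^{tA}E$ by the injectivity A6 (and A5 for the restriction to $H$), is a right inverse of $Y_t$; since $Z_te^{tA}=R_t$, this is equivalent to the identity $Y_tR_t=e^{tA}$ on $E$. I would obtain it by applying It\^o's product rule to $P_t=Y_tR_t$, using \rf{3} for $Y_t$ and \rf{mild-adjoint-1} for $R_t$. The two martingale parts produce in the drift an It\^o correction $\sum_i\sg_i'(X_t)\,P_t\,e^{-tA}\sg_i'(X_t)e^{tA}$, convergent by Lemma \ref{trace}, which is matched exactly by the term built from $\Sg(X_t)=\sum_i\sg_i'(X_t)\sg_i'(X_t)$ in \rf{mild-adjoint-1}; inserting the candidate $P_t=e^{tA}$ one checks that the diffusion part vanishes and the drift collapses to $Ae^{tA}\,dt$. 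Thus $P_t$ and $e^{tA}$ solve the same linear equation with common initial value $I$, and uniqueness forces $Y_tR_t=e^{tA}$, whence $Y_tZ_t=Y_tR_te^{-tA}=I$ on $e^{tA}E$.

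The step I expect to be the main obstacle is the rigorous justification of this last computation: $Y_t$ is only a mild solution of \rf{3} and $A$ is an unbounded generator, so It\^o's product formula does not apply verbatim. I would circumvent this by testing the identity against $v\in E$ and $y'\in D(A^*)$, for which $\lan Y_tR_tv,y'\ran$ is a genuine scalar It\^o process and the unbounded $A$ is transferred onto $A^*y'$, and, if needed, by first replacing $A$ by its Yosida approximants $A_\la$, carrying out the product-rule computation for the bounded generators, and letting $\la\to\infty$. Throughout, the convergence of the infinite-dimensional It\^o correction is kept under control by Lemma \ref{trace}, and a density argument in $y'\in D(A^*)$ then upgrades the scalar identities to $Y_tR_t=e^{tA}$, and hence to $Y_tZ_t=I$ on $e^{tA}E$.
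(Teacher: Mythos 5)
Your existence argument for $U_t$ coincides with the paper's: the same decomposition $R_t=I+U_t$, the same placement of the drift in $\mc L(E,H)$ via A8 and of the stochastic integrals in $\mc L_2(\td H,H)\zz\mc L(E,H)$ via A7 and the chain of norms $\|\cdot\|_{\mc L(H)}\lt\|\cdot\|_{\mc L(E,H)}\lt\|\cdot\|_{\mc L_2(\td H,H)}$, and the same Picard/contraction scheme in $S_2(\mc L(E,H))$. The divergence is in the second half. You aim directly at $Y_tR_t=e^{tA}$ and you correctly identify the obstruction: $Y_t$ is only a mild solution of \rf{3}, so It\^o's product rule does not apply verbatim because of the unbounded generator $A$. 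The paper removes this obstruction structurally rather than analytically: it introduces the auxiliary process $P_t$ solving \rf{34}, an SDE whose coefficients $e^{-sA}\al'(X_s)e^{sA}$ and $e^{-sA}\sg'(X_s)e^{sA}$ are bounded precisely thanks to A7--A8 and which contains no convolution with the semigroup and no occurrence of $A$; multiplying \rf{34} by $e^{tA}$ and invoking uniqueness for \rf{add4} gives $Y_t=e^{tA}P_t$, after which the It\^o product rule for $P_tR_t$ is legitimate, the process $Q_t=P_tR_t-I$ satisfies a homogeneous linear SDE in $\gm(H,E)$, and Gronwall yields $Q_t=0$, hence $Y_tZ_t=I$ on $e^{tA}E$. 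In that computation the unbounded operator never appears; in yours it does and must be disposed of by hand.

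The disposal is exactly the part you do not carry out, and it is where the real work lies. With Yosida approximants $A_\la$ you would have to show that the approximate $Y^\la_t$, $R^\la_t$ converge to $Y_t$, $R_t$ with bounds uniform in $\la$, and A7--A8 are formulated for $e^{tA}$, not for $e^{tA_\la}$, so the hypotheses do not transfer for free. With the duality route ($y'\in D(A^*)$) you must still formulate and prove uniqueness for the equation satisfied by $Y_tR_t-e^{tA}$, which again carries the drift $A(Y_tR_t)$; your closing appeal to ``uniqueness of the linear equation solved by $P_t$ and $e^{tA}$'' therefore presupposes a uniqueness statement for an SDE with unbounded drift that is not among the results established earlier. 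So the proposal is right in outline and its first half is complete and matches the paper, but the second half as written has a genuine gap at its central step; the conjugation $Y_t=e^{tA}P_t$ is the device the paper uses to close that gap, and your version needs either it or a fully executed approximation argument in its place.
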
 
\begin{proof}
Written with respect to $U_t = R_t - I$, \rf{mild-adjoint-1} takes the form:
\mmm{
\label{mild-adjoint-2}
U_t = \int_0^tU_s e^{-sA} \bigl(\Sg(X_s)-\al'(X_s)\bigr)e^{sA}\, ds 
-\int_0^tU_s e^{-sA}\sg'(X_s) e^{sA}\, dW_s \\
+ \int_0^t e^{-sA} \bigl(\Sg(X_s)-\al'(X_s)\bigr)e^{sA}\, ds
-\int_0^t e^{-sA}\sg'(X_s) e^{sA}\, dW_s.
}
Note that, for a Hilbert-Schmidt operator $B: \td H\to H$ we have the following relation
between its different norms:
\aaa{
\lb{norms}
\|A\|_{\mc L(H)} \lt \|A\|_{\mc L(E,H)} \lt \|A\|_{\mc L_2(\td H,H)}. 
}
This allows us to solve \rf{mild-adjoint-2} in $\mc L(E,H)$.
Indeed, due to \rf{norms}, the stochastic integral
$\int_0^t e^{-sA}\sg'(X_s) e^{sA}\, dW_s$ is well-defined in $\mc L_2(\td H,H)$,
and, therefore, in $\mc L(E,H)$.
Define the map $\Gm: S_2(\mc L(E,H))\to S_2(\mc L(E,H))$, $U\mto \Gm(U)$, where $\Gm(U)$ 
equals to the right-hand side of \rf{mild-adjoint-2}.
For the stochastic integral in the first line of \rf{mild-adjoint-2}, we obtain:
\mm{
\E\Big\|\int_0^tU_s e^{-sA}\sg'(X_s) e^{sA}\, dW_s\Big\|^2_{\mc L(E,H)}
\lt 
\E\Big\|\int_0^tU_s e^{-sA}\sg'(X_s) e^{sA}\, dW_s\Big\|^2_{\mc L_2(\td H,H)}\\
\lt 
\E\int_0^t \|U_s\|^2_{\mc L(E,H)}\|e^{-sA}\sg'(X_s) e^{sA}\|^2_{\mc L_2(H,\mc L_2(\td H,H))}\, ds.
}
Therefore,
the stochastic integral $\int_0^tU_s e^{-sA}\sg'(X_s) e^{sA}\, dW_s$
takes values in $\mc L(E,H)$. Moreover, the map $\Gm$
has a fixed point in $S_2(\mc L(E,H))$ which can be proved
in exactly the same way as in Theorem \ref{th1}, and hence,
\rf{mild-adjoint-2} has an $\mc L(E,H)$-valued solution.
The solution $U_t$ to \rf{mild-adjoint-2} is also unique 
and possesses a continuous path modification.
It is easy to verify that $R_t = I + U_t$ solves \rf{mild-adjoint-1},
and, moreover, it is a unique solution.

Let us consider the equation:
\aaa{
\lb{34}
P_t = I + \int_0^t e^{-sA}\al'(X_s)e^{sA}P_s\, ds 
+ \int_0^t e^{-sA}\sg'(X_s)e^{sA}\,P_s\, dW_s.
}
Similar to \rf{3}, we can prove that \rf{34} has a solution
of the form $P_s = I + \td V_s$, where $\td V_s\in S_2(\gm(H,E))$,
and that $\td V_s$ is the unique solution to
\mmm{
\lb{35}
\td V_t = \int_0^t e^{-sA}\al'(X_s)e^{sA}\td V_s\, ds 
+ \int_0^t e^{-sA}\sg'(X_s)e^{sA}\,\td V_s\, dW_s\\
+\int_0^t e^{-sA}\al'(X_s)e^{sA}\, ds 
+ \int_0^t e^{-sA}\sg'(X_s)e^{sA}\, dW_s.
} 
Multiplying \rf{34} and \rf{35} by $e^{tA}$ from the left,
we obtain \rf{3} and \rf{add4}, respectively. By the
uniqueness
of the solution to \rf{add4} in $S_2(\gm(H,E))$, 
$V_t = e^{tA} \td V_t$ and, therefore, $Y_t = e^{tA} P_t$. 

Let us show that $P_tR_t=I$ on $H$.
To compute $P_tR_t$, we apply It\^o's formula to $\<R_ty,P_t^*y'\>$,
where $y\in H$, $y'\in E^*$:
\mm{
\<P_tR_ty,y'\> =  \<y,y'\> +
\int_0^t \<e^{-sA}\al'(X_s)e^{sA}P_sR_sy,y'\> ds \\
+ \int_0^t \<e^{-sA}\sg'(X_s)e^{sA}\,P_sR_sy,y'\> dW_s
-\int_0^t\<P_sR_s e^{-sA}\sg'(X_s) e^{sA}y,y'\> dW_s\\
+\int_0^t\<P_sR_s e^{-sA} \bigl(\Sg(X_s)-\al'(X_s)\bigr)e^{sA}y,y'\> ds \\
- \int_0^t \sum_{k=1}^\infty \<e^{-sA} \sg'_k(X_s)e^{sA}  P_sR_s 
e^{-sA} \sg'_k(X_s) e^{sA}y,y'\> ds.
}
Note that if we substitute $P_tR_t = I$, the above equation will be
satisfied. Denoting $P_tR_t - I$ by $Q_t$ we obtain the following SDE:
\mmm{
\label{Qsy}
Q_t = 
 \int_0^t e^{-sA}\al'(X_s)e^{sA}Q_s ds
+ \int_0^t Q_s e^{-sA}(\Sg(X_s) - \al'(X_s)) e^{sA} \, ds\\
+ \int_0^t e^{-sA} \sg'(X_s)e^{sA} Q_s  \, dW_s
- \int_0^t Q_s e^{-sA}\sg'(X_s) e^{sA}\, dW_s\\
- \int_0^t \sum_{k=1}^\infty e^{-sA} \sg'_k(X_s)e^{sA}  Q_s 
e^{-sA} \sg'_k(X_s) e^{sA}\, ds.
}
By the assumptions imposed on $\al'$ and $\sg'$, the right-hand side
takes values in $\gm(H,E)$. Therefore, \rf{Qsy} is a well-defined SDE 
in $\gm(H,E)$.
The usual stochastic integral estimates and Gronwall's inequality imply that
\aa{
\E \|Q_t\|_{\gm(H,E)}^2=0
}
for all $t\in [0,T]$. 
Hence, $Q_t = 0$ a.s.
This implies that $P_tR_t = I$ on $H$. 
By continuity
of paths, the set $\td \Om\sub\Om$, $\PP(\td \Om) = 1$, 
where $P_tR_t = I$, can be choosen the same for all $t\in [0,T]$. 
Note that on $H$, $P_t = I - P_tU_t$. But $I-P_tU_t$ takes
values in $\mc L(E)$ a.s. Therefore, $P_t$ can be a.s. extended 
to a bounded operator $E\to E$.
Thus, $P_tR_t = I$ everywhere on $\td \Om$. This implies that 
$Y_tZ_te^{tA}y = e^{tA}y$ a.s. for all $y\in E$.
Hence, $Y_tZ_t = I$ a.s. on $e^{tA}E$.
\end{proof}
\section{A version of H\"ormander's theorem}
\lb{Hormander}
For every $x\in D(A)$, where $D(A)$ denotes the domain of $A$,
we define $\sg_0(x) = Ax + \al(x) - \frac12\sum_{k=1}^\infty \sg'_k(x)\sg_k(x)$,
and note that the third summand is well-defined. Indeed,
application of Lemma \ref{trace}  
implies the convergence of the sum in $E$:
\aa{
\Big\|\sum_{k=1}^\infty \sg'(x)e_k\sg(x)e_k\Big\|_E\lt 
\|\sg'(x)\|_{\gm(H,\mc L(E))}\|\sg(x)\|_{\gm(H,E)}.
}
SDE \eqref{SDE} takes the form
\aa{
dX_t = \sg_0(X_t)dt + \sum_{k=1}^\infty \sg_k(X_t)\circ dW^k_t.
}
For two differentiable vector fields $V_1, V_2:E\to E$ the
Lie bracket $[V_1,V_2]$ is defined as in \cite{Nualart}.
If a vector field of the form $A^kx$, $k=1,2,\ldots,$ is involved in a Lie bracket, 
then $A$ is formally treated as a bounded operator
when we compute derivatives.
For example, if  $V: E\to D(A)$ is a vector field which is
Fr\'echet differentiable $E\to E$,
then, the Lie bracket
$[Ax,V(x)]:$\  $D(A) \to E$ is computed by the formula
\aa{
[Ax,V(x)] = AV(x) - V'(x)Ax.
}
For our version of H\"ormander's theorem,
we need Assumptions A9, A10, and H below:
\begin{enumerate}
\item[\textbf{A9}]
$\al,\sg_i$, $i=1,2,\ldots$, are 
infinitely differentiable functions
$E\to D(A^\infty)$, where $D(A^\infty) = \cap_{i=1}^\infty D(A^i)$;
the function $\sg': E\to \mc \gm(H,\mc L(E))$ is differentiable. 
\item[\textbf{A10}] $\sg$ is a map $E\to \mc L(H,e^{TA}E)$,
where  the Banach space $e^{TA} E$ is equipped with the norm $\|x\|_{e^{TA}E} = \|e^{-TA} x\|_E$.
\item[\textbf H](H\"ormander's condition)
The vector space spanned by the vector fields
\aa{
\sg_1, \sg_2, \ldots, \, [\sg_i,\sg_j], [\sg_i,[\sg_j,\sg_k]], \,  i,j,k = 0,1, \ldots
}
evaluated at point $x\in D(A^\infty)$, is dense in $E$.
\end{enumerate}
Note that under Assumption A9, all the Lie brackets 
in Assumption H are well-defined as vector fields 
$D(A^\infty)\to D(A^\infty)$. In the following, 
we will need Lemma \ref{lem4} below.
\begin{lem}
\label{lem4}
Let $V: E\to E$ be a $C^2$-vector field.
Under Assumption A9, the term
$\bigl\{ [\sg_0,V] + \frac12\sum_{k=1}^\infty [\sg_k,[\sg_k,V]]\bigr\}(x)$,
$x\in E$, is well-defined. Moreover,
\aa{
-\frac12\, \Bigl[\,\sum_{k=1}^\infty \sg'_k(x)\sg_k(x),V\Bigr](x) + 
\frac12\sum_{k=1}^\infty [\sg_k,[\sg_k,V]](x)\\ =\sum_{k=1}^\infty
\bigl(-\sg'_k(V'\sg_k) + \frac12 \, V''\sg_k\sg_k + \sg'_k(\sg'_kV)
\bigr).
}
\end{lem}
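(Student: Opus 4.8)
The plan is to prove the identity by a direct expansion of both Lie brackets via the defining formula $[P,Q](x)=P'(x)Q(x)-Q'(x)P(x)$ and then collecting like terms, obtaining the well-definedness as a by-product from Assumption A9 and Lemma \ref{trace}. Before any computation I would record that, by A9, $\al$ and all the $\sg_k$ are smooth maps into $D(A^\infty)\sub E$, so that for a $C^2$-field $V$ each individual bracket $[\sg_k,V]$, $[\sg_k,[\sg_k,V]]$ and $[\sg'_k\sg_k,V]$ is a bona fide $E$-valued expression; the only genuine analytic issue is the convergence in $E$ of the two infinite series, which I would postpone to the end.

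For the algebra I would first expand the inner bracket $[\sg_k,V]=\sg'_kV-V'\sg_k$ and then differentiate it, being careful that $\sg'_k$ and $V'$ are themselves operator-valued, so that differentiating them produces the second derivatives $\sg''_k$ and $V''$. This gives $[\sg_k,[\sg_k,V]]=\sg'_k(\sg'_kV)-2\sg'_k(V'\sg_k)-\sg''_k(\sg_k,V)+V''\sg_k\sg_k+V'(\sg'_k\sg_k)$, where $\sg''_k(\sg_k,V)$ denotes the symmetric second derivative evaluated at the pair $(\sg_k,V)$. In parallel, viewing $\sum_k\sg'_k\sg_k$ as a vector field whose directional derivative along $h$ is $\sum_k(\sg''_k(h,\sg_k)+\sg'_k(\sg'_kh))$, I obtain $[\sg'_k\sg_k,V]=\sg''_k(V,\sg_k)+\sg'_k(\sg'_kV)-V'(\sg'_k\sg_k)$.

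The decisive step is to substitute these two expansions into the left-hand side, weight them by $-\frac12$ and $\frac12$, and collect the five structurally distinct terms $\sg''_k(\sg_k,V)$, $\sg'_k(\sg'_kV)$, $\sg'_k(V'\sg_k)$, $V'(\sg'_k\sg_k)$ and $V''\sg_k\sg_k$. The content of the lemma is that the genuinely second-order-in-$\sg_k$ contributions $\sg''_k(\sg_k,V)$ cancel and the redundant copies of $\sg'_k(\sg'_kV)$ and $V'(\sg'_k\sg_k)$ combine, leaving precisely the three terms on the right; I would verify this by a straightforward comparison of coefficients. I regard this cancellation of the second-derivative terms as the heart of the statement, since it is what turns a sum involving $\sg''_k$ into a first-order expression, and it is the step most sensitive to the sign in the convention $[P,Q]=P'Q-Q'P$ and to the factors of $\frac12$; the disciplined bookkeeping of the operator-valued chain rule is therefore where I expect the real work to lie.

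It remains to justify the convergence, which I would carry out term by term with Lemma \ref{trace}. Writing $\sg_k=\sg(x)e_k$ and $\sg'_k=\sg'(x)e_k$, each surviving series has the form $\Tr_{R,S}T$ with $R,S$ chosen among $\sg(x)\in\gm(H,E)$ and $\sg'(x)\in\gm(H,\mc L(E))$ and $T$ a bounded multilinear map assembled from $\sg'(x)$, $V'$ or $V''$: for instance $\sum_k\sg'_k(\sg'_kV)$ is $\Tr_{R,S}T$ with $R=S=\sg'(x)$ and $T(B)(C)=B(CV)$, while $\sum_kV''\sg_k\sg_k$ is $\Tr_{R,S}T$ with $R=S=\sg(x)$ and $T=V''$. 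Assumption A9 furnishes the finiteness of the relevant $\gm$-norms, so Lemma \ref{trace} yields convergence in $E$ and legitimizes the rearrangements used above; applying the same lemma to the $\sg''_k$-series shows that the terms I claim to cancel are themselves individually summable, so the cancellation is meaningful rather than formal. The main obstacle, as noted, is not any single estimate but the careful tracking of the operator-valued derivatives and of their signs.
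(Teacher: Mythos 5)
Your overall strategy is the same as the paper's: expand both brackets termwise for each fixed $k$, observe the cancellations, and then sum over $k$ via Lemma \ref{trace} with $R,S$ chosen among $\sg(x)\in\gm(H,E)$ and $\sg'(x)\in\gm(H,\mc L(E))$; the convergence part of your argument is essentially the paper's. The problem is in the algebra, at precisely the step you declare to be the heart of the matter and then leave as a ``straightforward comparison of coefficients.'' With the convention you fix, $[P,Q]=P'Q-Q'P$, your two expansions
\[
[\sg_k,[\sg_k,V]]=\sg'_k(\sg'_kV)-2\sg'_k(V'\sg_k)-\sg''_k(\sg_k,V)+V''\sg_k\sg_k+V'(\sg'_k\sg_k),
\qquad
[\sg'_k\sg_k,V]=\sg''_k(V,\sg_k)+\sg'_k(\sg'_kV)-V'(\sg'_k\sg_k)
\]
are each internally consistent, but the weighted sum $-\tfrac12[\sg'_k\sg_k,V]+\tfrac12[\sg_k,[\sg_k,V]]$ they yield is
\[
-\sg''_k(\sg_k,V)+V'(\sg'_k\sg_k)-\sg'_k(V'\sg_k)+\tfrac12\,V''\sg_k\sg_k,
\]
not the right-hand side of the lemma: by symmetry of $\sg''_k$ the two second-derivative contributions enter with the \emph{same} sign and add instead of cancelling, the two copies of $\sg'_k(\sg'_kV)$ cancel instead of combining, and a spurious $V'(\sg'_k\sg_k)$ survives. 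So the coefficient check you defer would in fact refute the cancellation pattern you announce.

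The stated identity comes out only if the single bracket carries the opposite sign, i.e.\ under Nualart's convention $[P,Q]=Q'P-P'Q$. A global sign flip of the bracket leaves the twofold bracket $[\sg_k,[\sg_k,V]]$ unchanged but reverses $[\sg'_k\sg_k,V]$, and it is with $[\sg'_k\sg_k,V]=V'(\sg'_k\sg_k)-\sg''_k(V,\sg_k)-\sg'_k(\sg'_kV)$ that the $\sg''_k$ terms cancel and the lemma's right-hand side appears; this is what the paper's computation in \rf{last-line} actually uses. (The paper itself is not blameless here: its illustrative formula $[Ax,V(x)]=AV(x)-V'(x)Ax$ points to your convention, so there is a latent clash in the source; but the lemma as stated is true only for the sign you did not choose.) You correctly identified the sign convention as the sensitive point, but you must resolve it rather than flag it, since with the formulas as you wrote them the proof does not close. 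The remainder of your argument --- term-by-term summability via Lemma \ref{trace}, including the observation that the cancelling $\sg''_k$-series is itself summable so the cancellation is not merely formal --- matches the paper and is fine.
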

\begin{proof}
Let us compute the sum of two Lie brackets for a fixed $k$.
\mmm{
\lb{last-line}
-\frac12\,[\sg'_k(x)\sg_k(x),V] + \frac12\,[\sg_k,[\sg_k,V]] = 
\frac12\bigl(\sg''_k\sg_k V + \sg'_k(\sg'_k V) - V'(\sg'_k\sg_k)\\
+\sg'_k(\sg'_kV) - \sg'_k(V'\sg_k) - \sg''_k V\sg_k - \sg'_k(V'\sg_k)
+ V''\sg_k\sg_k + V'(\sg'_k\sg_k)\bigr)\\
= -\sg'_k(V'\sg_k) + \frac12 \, V''\sg_k\sg_k + \sg'_k(\sg'_kV).
}
By Lemma \ref{trace}, for the first term in the last line of \rf{last-line}
we have the estimate:
\aa{
\Big\|\sum_{k=1}^\infty \sg'(x)e_k(V'\sg)(x) e_k)\Big\|_E
\lt 
\|\sg'(x)\|_{\gm(H,\mc L(E))} \|V'(x)\sg(x)\|_{\gm(H,E)}.
}
For the third term in the last line of \rf{last-line},
we obtain:
\aa{
\Big\|\sum_{k=1}^\infty \sg'(x)e_k(\sg' V)(x)e_k\Big\|_E
\lt \|\sg'(x)\|_{\gm(H,\mc L(E))} \|(\sg'V)(x)\|_{\gm(H,E)}.
}
Finally, the estimate of the second term in the third line of \rf{last-line} is:
\aa{
\Big\|\sum_{k=1}^\infty [(V''\sg)(x) e_k\sg(x) e_k]\Big\|_E
\lt \|V''(x)\|_{\mc L(E,\mc L(E))}\, \|\sg(x)\|^2_{\gm(H,E)}.
}
These estimates imply that we can take summations in $k$ of the both parts in
\rf{last-line}. The additional two estimates
\aa{
\sum_{k=1}^\infty \Big\|\sg''(x)V(x)e_k \sg(x)e_k\Big\|_E \lt
\|\sg''(x)V(x)\|_{\gm(H,\mc L(E))}
\|\sg\|_{\gm(H,E)}
}
and
\aa{
\Big\|\sum_{k=1}^\infty [(V'\sg')(x) e_k\sg(x) e_k]\Big\|_E
\lt  \|(V'\sg')(x)\|_{\mc \gm(H,\mc L(E))}\, \|\sg(x)\|_{\gm(H,E)}
}
imply that 
the term $[\sg_0,V]$ is well-defined. This, in turn, implies that the term
$\frac12\sum_{k=1}^\infty [\sg_k,[\sg_k,V]](x)$ is well-defined as well.
\end{proof}
\begin{lem}
Let $X_t$ be a mild solution to SDE \eqref{SDE}
with the initial condition  $x\in D(A)$, and let A9
be fulfilled. Then $X_t$ is also a strong solution to \eqref{SDE}.
\end{lem}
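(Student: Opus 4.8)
The plan is to pass from the mild identity \rf{mild_SDE} to the strong form of \rf{SDE},
\[
X_t = x + \int_0^t\bigl(AX_s + \al(X_s)\bigr)\,ds + \int_0^t\sg(X_s)\,dW_s,
\]
which amounts to three things: that $X_s\in D(A)$ a.s. for a.e. $s$, that $s\mapsto AX_s$ be a.s. integrable, and that the above identity hold. I would organize the argument into two independent strands — a spatial regularity strand establishing $X_s\in D(A)$, and a weak-formulation strand tested against every $y'\in D(A^*)$ — and then merge them, using reflexivity of $E$ to upgrade the weak identity to the strong one.

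For the regularity strand I would treat the three terms of \rf{mild_SDE} separately. The term $e^{sA}x$ lies in $D(A)$ because $x\in D(A)$. For the deterministic convolution, Assumption A9 gives $\al(X_u)\in D(A^\infty)\sub D(A)$; together with the continuity of the path $u\mapsto X_u$ and the closedness of $A$, the standard regularity result for convolutions yields $\int_0^s e^{(s-u)A}\al(X_u)\,du\in D(A)$ with $A\int_0^s e^{(s-u)A}\al(X_u)\,du = \int_0^s e^{(s-u)A}A\al(X_u)\,du$. The stochastic convolution is the delicate term: I would apply the resolvent $R_n = n(nI-A)^{-1}$, which commutes with $e^{(s-u)A}$ and passes through the stochastic integral, so that $A R_n\int_0^s e^{(s-u)A}\sg(X_u)\,dW_u = \int_0^s e^{(s-u)A}R_n\,A\sg(X_u)\,dW_u$, where $A\sg(X_u)\in\gm(H,E)$ by A9 and the ideal property of $\gm$-radonifying operators. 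Letting $n\to\infty$ and using the stochastic integral estimates of Section \ref{existence} together with the closedness of $A$, I would conclude that $\int_0^s e^{(s-u)A}\sg(X_u)\,dW_u\in D(A)$ and that $A$ may be taken inside the integral. This gives $X_s\in D(A)$ a.s. and, by the same estimates, $\E\int_0^T\|AX_s\|\,ds<\infty$.

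For the weak-formulation strand I would fix $y'\in D(A^*)$, apply $y'$ to \rf{mild_SDE}, and use the identity $\lan e^{rA}z,y'\ran = \lan z,y'\ran + \int_0^r\lan e^{uA}z,A^*y'\ran\,du$ together with the ordinary and the stochastic Fubini theorems to rewrite the result as
\[
\lan X_t,y'\ran = \lan x,y'\ran + \int_0^t\lan X_s,A^*y'\ran\,ds + \int_0^t\lan\al(X_s),y'\ran\,ds + \Big\langle\int_0^t\sg(X_s)\,dW_s,\,y'\Big\rangle.
\]
This identity holds without any a priori regularity of $X$. Invoking the regularity strand, $\lan X_s,A^*y'\ran = \lan AX_s,y'\ran$, so the element
\[
X_t - x - \int_0^t\bigl(AX_s+\al(X_s)\bigr)\,ds - \int_0^t\sg(X_s)\,dW_s
\]
is annihilated by every $y'\in D(A^*)$. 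Since $E$ is 2-smooth, hence reflexive, and $A$ is densely defined and closed, $D(A^*)$ is dense in $E^*$; therefore this element vanishes a.s., which is exactly the strong form of \rf{SDE}.

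The hard part will be the spatial regularity of the stochastic convolution, i.e. showing $\int_0^t e^{(t-s)A}\sg(X_s)\,dW_s\in D(A)$ with $A$ commuting past the stochastic integral. This is where A9 is genuinely used, through the requirement that $A\sg(X_s)$ remain $\gm$-radonifying, and where the 2-smooth stochastic calculus of Section \ref{existence} — the estimate bounding the $\gm(H,E)$-norm of the stochastic convolution by the time integral of the $\gm(H,E)$-norm of the integrand — is essential to justify the passage to the limit in $n$; the deterministic convolution and the final weak-to-strong upgrade are comparatively routine.
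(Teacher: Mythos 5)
Your overall strategy is sound and, in fact, considerably more explicit than what the paper offers: the paper's entire proof of this lemma is the single sentence that the claim ``can be verified by taking stochastic differentials of the both parts'' of the equation, i.e.\ it simply asserts that differentiating the mild identity recovers the strong one. Your route --- first the weak identity tested against every $y'\in D(A^*)$ via the semigroup identity and stochastic Fubini, then the spatial regularity $X_s\in D(A)$ term by term, then the weak-to-strong upgrade using density of $D(A^*)$ in $E^*$ (which holds since $E$ is reflexive) --- is the standard rigorous way to make that one-liner precise, and the decomposition of \rf{mild_SDE} into its three terms, with the approximation $R_n=n(nI-A)^{-1}$ handling the stochastic convolution, is the right mechanism.

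One step is justified incorrectly as written. You claim $A\sg(X_u)\in\gm(H,E)$ ``by A9 and the ideal property of $\gm$-radonifying operators.'' The ideal property only lets you compose a $\gm$-radonifying operator with \emph{bounded} operators on either side; $A$ is unbounded, so it gives you nothing here. Moreover, A9 as stated only says that each individual $\sg_i$ maps $E$ into $D(A^\infty)$; knowing that each column $A\sg_i(x)=A\sg(x)e_i$ is a well-defined element of $E$ does not imply that the operator $A\sg(x):H\to E$ is $\gm$-radonifying, since membership in $\gm(H,E)$ is not a columnwise condition. To close this you must either read A9 as asserting that $\sg$ takes values in $\gm(H,D(A))$ with $D(A)$ carrying the graph norm --- presumably the intent, since otherwise the lemma's conclusion is out of reach by any method --- or state that hypothesis explicitly. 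The same issue, in milder form, affects the deterministic convolution: you need $s\mapsto A\al(X_s)$ to be Bochner integrable, which requires continuity (or at least local boundedness) of $A\al:E\to E$, again a graph-norm reading of A9 rather than its literal statement. With that strengthening made explicit, your argument goes through.
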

\begin{proof}
The statement of the lemma can be verified by taking stochastic differentials
of the both parts of \rf{SDE}.
\end{proof}
\begin{lem}
\lb{lee-2}
Let Assumptions A1, A2, and A10 be fulfilled. Further
let $x\in D(A)$, and $V:E\to e^{TA}E$ be a vector field
which is a $C^2$-smooth function $E\to E$. 
Then, if $X_t$ is a strong solution to \eqref{SDE} and 
$Z_t$ is the solution to \eqref{mild-adjoint}, it holds that
\mmm{
\label{form_lem2}
Z_tV(X_t) = V(x) + \sum_{k=1}^\infty \int_0^t Z_s[\sg_k,V](X_s)\,dW^k_s \\
 +\int_0^t Z_s\big(
[\sg_0,V] + \frac12\sum_{k=1}^\infty [\sg_k,[\sg_k,V]]
\big)(X_s)\, ds.
}
\end{lem}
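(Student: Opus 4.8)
The plan is to derive \rf{form_lem2} by applying the It\^o product formula to $Z_tV(X_t)$ and reorganizing the resulting martingale and drift terms into the two Lie brackets on the right-hand side. First I would record the two It\^o differentials entering the product. Since $x\in D(A)$ and $X_t$ is a strong solution, \rf{SDE} reads $dX_t=(AX_t+\al(X_t))\,dt+\sum_k\sg_k(X_t)\,dW^k_t$, so the It\^o formula for the $C^2$ map $V$ gives
\mm{
dV(X_t)=V'(X_t)(AX_t+\al(X_t))\,dt+\frac12\sum_k V''(X_t)(\sg_k(X_t),\sg_k(X_t))\,dt\\
+\sum_k V'(X_t)\sg_k(X_t)\,dW^k_t.
}
For the second factor, differentiating \rf{mild-adjoint} and using $Z_t=(Z_te^{tA})e^{-tA}$, with $A$ treated formally as bounded as in Section \ref{Hormander}, yields
\aa{
dZ_t=-Z_t\big(A+\al'(X_t)-\Sg(X_t)\big)\,dt-\sum_k Z_t\sg'_k(X_t)\,dW^k_t,
}
where $\Sg(X_t)=\sum_k\sg'_k(X_t)\sg'_k(X_t)$.

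Next I would apply the product rule $d(Z_tV(X_t))=(dZ_t)V(X_t)+Z_t\,dV(X_t)+d[Z,V(X)]_t$, the joint quadratic variation contributing $-\sum_k Z_t\sg'_k(X_t)V'(X_t)\sg_k(X_t)\,dt$. The $dW^k_t$-terms collect into $\sum_k Z_t\big(V'(X_t)\sg_k(X_t)-\sg'_k(X_t)V(X_t)\big)\,dW^k_t$, which is exactly $\sum_k Z_t[\sg_k,V](X_t)\,dW^k_t$ by the definition of the bracket, matching the first integral in \rf{form_lem2}.

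For the drift I would substitute $\sg_0=Ax+\al-\frac12\sum_k\sg'_k\sg_k$ and invoke Lemma \ref{lem4}. Writing $\beta(x)=Ax+\al(x)$ one has $[\sg_0,V]=[\beta,V]-\frac12[\sum_k\sg'_k\sg_k,V]$, while Lemma \ref{lem4} identifies $-\frac12[\sum_k\sg'_k\sg_k,V]+\frac12\sum_k[\sg_k,[\sg_k,V]]$ with $\sum_k(-\sg'_k(V'\sg_k)+\frac12 V''\sg_k\sg_k+\sg'_k(\sg'_kV))$. Since $[\beta,V]=V'(Ax+\al)-(A+\al')V$, a term-by-term comparison shows the accumulated $dt$-terms equal $Z_t\big([\sg_0,V]+\frac12\sum_k[\sg_k,[\sg_k,V]]\big)(X_t)$; in particular the drift $Z_t\Sg(X_t)V(X_t)$ from $dZ_t$ supplies the $\sg'_k(\sg'_kV)$ contributions and the covariation supplies the $-\sg'_k(V'\sg_k)$ ones. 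Integrating from $0$ to $t$ and using $Z_0=I$, $X_0=x$ then yields \rf{form_lem2}.

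Two points need care, the second being the real obstacle. The minor one is well-posedness of every object: because $V$ maps into $e^{TA}E\sub e^{tA}E$, the vector $V(X_t)$ lies in the domain of $Z_t=R_te^{-tA}$, so $Z_tV(X_t)$ is meaningful, and Assumption A10 plays the same role for the $\sg_k(X_t)$ entering $dV(X_t)$. The main obstacle is that the displayed equation for $dZ_t$ is only formal, as $A$ is unbounded and $Z_tA$ is a priori defined only on $D(A)$. To make the argument rigorous I would avoid handling $dZ_t$ in isolation and instead pair with an arbitrary $y'\in E^*$, running It\^o's formula for the scalar process $\<Z_tV(X_t),y'\>$ via \rf{mild-adjoint} for $R_t=Z_te^{tA}$ together with the strong form of \rf{SDE} --- the device already used for $P_tR_t$ in the proof of Theorem \ref{th-h1}. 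One must also justify interchanging the infinite sum over $k$ with the stochastic integral and the expectation, which follows from the trace-class bounds of Lemma \ref{trace} and the convergence of the bracket series established in Lemma \ref{lem4}.
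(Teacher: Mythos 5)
Your proposal is correct and follows essentially the same route as the paper: It\^o's formula for $V(X_t)$ using the strong form of \eqref{SDE}, the dynamics of $Z_t$ read off from \eqref{mild-adjoint}, the product rule with the cross-variation term $-\sum_k Z_s\sg'_k(X_s)V'(X_s)\sg_k(X_s)\,ds$, and Lemma \ref{lem4} to assemble the drift into $[\sg_0,V]+\frac12\sum_k[\sg_k,[\sg_k,V]]$. Your extra care about the unboundedness of $A$ (pairing with $y'\in E^*$, as in the proof of Theorem \ref{th-h1}) is a sensible refinement of what the paper does implicitly by restricting the $Z_t$-equation to $e^{TA}D(A)$, not a different argument.
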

\begin{proof}
By It\^o's formula (see \cite{Belopolskaya}, \cite{brz}),
\mm{
V(X_t)  = V(x)  + 
\int_0^t V'(X_s)(AX_s + \al(X_s)) ds + \int_0^t V'(X_s) \sg(X_s)\, dW_s  \\
+ \frac12 \int_0^t \sum_{k=1}^\infty V''(X_s) \sg_k(X_s) \sg_k(X_s)\, ds.
}
Equation \rf{mild-adjoint} implies that for $x\in e^{TA}D(A)$,
\aa{
Z_t x = x - \int_0^t Z_sAx\, ds + \int_0^t Z_s(\Sg(X_s) - \al'(X_s))x\, ds
- \int_0^t Z_s\sg'(X_s)x\, dW_s.
}
Applying It\^o's formula to $Z_tV(X_t)$ we obtain: 
\mm{
Z_tV(X_t) = V(x) 
+\int_0^t Z_s\big(\Sg(X_s)-A - \al'(X_s)\big)V(X_s)\, ds\\
+ \int_0^t Z_s\Big(V'(X_s)(AX_s + \al(X_s)) + \frac12 \sum_{k=1}^\infty
V''(X_s)\sg_k(X_s)\sg_k(X_s)\Big)\, ds\\
-\int_0^t \sg'(X_s)Z_s V(X_s)\, dW_s
+ \int_0^t Z_sV'(X_s)\sg(X_s)\, dW_s\\
-\int_0^t \sum_{k=1}^\infty Z_s\sg'_k(X_s)V'(X_s)\sg_k(X_s)\, ds
= V(x) +\\
\int_0^t Z_s[AX_s + \al(X_s),V(X_s)]\, ds 
+  \sum_{k=1}^\infty \int_0^t Z_s[\sg_k,V](X_s)\, dW_s \\ 
+ \int_0^t \sum_{k=1}^\infty Z_s
[-\sg'_k(V'\sg_k) + \frac12 \, V''\sg_k\sg_k + \sg'_k(\sg'_kV)](X_s)\,ds.
}
By Lemma \ref{lem4}, the right-hand side of the 
above relation equals the right-hand side of \eqref{form_lem2}.
\end{proof}
The main result of this paper is the following version
of H\"ormander's theorem.
\begin{thm}
\label{hth}
Let Assumptions A1-A10 and H\"ormander's condition H be satisfied.
Further let $F : E \to \Rnu^k$ be a
bounded linear operator of rank $k$.
Then, for any fixed $t$, the probability distribution of 
$FX_t$ is absolutely continuous with respect to
the Lebesgue measure on $\Rnu^k$.
\end{thm}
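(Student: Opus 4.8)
The plan is to follow the Malliavin-calculus route to absolute continuity: reduce the statement to the a.s. invertibility of the Malliavin covariance matrix of $FX_t$, and then extract this non-degeneracy from H\"ormander's condition H through the bracket identity of Lemma \ref{lee-2}. Write $F=(f_1,\dots,f_k)$ with $f_i\in E^*$. First I would record that $FX_t\in(\mathbb D^{1,2})^k$: each $f_i$ is bounded and linear, so by Theorem \ref{thhh2} and the chain rule (Lemma \ref{chain-lem}) one has $f_i(X_t)\in\mathbb D^{1,2}$ with $D_r\big(f_i(X_t)\big)=f_i(D_rX_t)$. By the standard criterion for absolute continuity (see \cite{Nualart}) it then suffices to prove that the matrix
\[
\gm_t^{ij}=\sum_{l=1}^\infty\int_0^t f_i(D_rX_t\,e_l)\,f_j(D_rX_t\,e_l)\,dr
\]
is a.s. invertible, i.e. that $v^\top\gm_t v>0$ a.s. for every $v\neq 0$.

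For the algebraic structure I would use the representation $D_rX_t=Y_tZ_r\sg(X_r)$ for $r\lt t$, the infinite-dimensional counterpart of the classical identity $D_rX_t=Y_tY_r^{-1}\sg(X_r)$, which follows from the uniqueness for the linear equation \rf{SDE-MD} together with Theorems \ref{th1} and \ref{th-h1} (using in particular $Z_0=\id$). Setting $\phi=Y_t^*F^*v\in E^*$, this gives
\[
v^\top\gm_t v=\sum_{l=1}^\infty\int_0^t\lan\phi,\,Z_r\sg_l(X_r)\ran^2\,dr .
\]
Now suppose, for contradiction, that on a set of positive probability there is a (necessarily $\mc F_t$-measurable) $v\neq 0$ with $v^\top\gm_t v=0$. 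Then $\lan\phi,Z_r\sg_l(X_r)\ran=0$ for a.e.\ $r\lt t$ and every $l\gt 1$.

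The heart of the argument is a bracket iteration built on Lemma \ref{lee-2}. For a $C^2$ vector field $W:E\to e^{TA}E$ put $g_W(r)=\lan\phi,Z_rW(X_r)\ran$; by Lemma \ref{lee-2} the $E$-valued process $Z_rW(X_r)$ is a continuous semimartingale whose martingale part has integrands $Z_s[\sg_k,W](X_s)$ and whose drift is $Z_s\big([\sg_0,W]+\tfrac12\sum_k[\sg_k,[\sg_k,W]]\big)(X_s)$. I would show that $g_W\equiv 0$ forces $g_{W'}\equiv 0$ for every first-generation bracket $W'=[\sg_k,W]$ and for $W'=[\sg_0,W]$. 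The delicate point is that $\phi$ is \emph{not} adapted, so I cannot split $g_W$ into its martingale and bounded-variation parts in the usual way; instead I would argue pathwise. The quadratic variation of $g_W$ equals $\sum_k\int_0^\cdot\lan\phi,Z_s[\sg_k,W](X_s)\ran^2\,ds$, a pathwise object insensitive to adaptedness; from $g_W\equiv 0$ it vanishes, giving $g_{[\sg_k,W]}\equiv 0$ for all $k$ (continuity of $s\mapsto g_{[\sg_k,W]}(s)$ upgrades ``a.e.'' to ``everywhere''). Once the integrands $\lan\phi,Z_s[\sg_k,W](X_s)\ran$ vanish identically, approximating the stochastic integral by It\^o--Riemann sums and using continuity of $\lan\phi,\fdot\ran$ shows that $\lan\phi,M^W_r\ran\equiv 0$ for the martingale part $M^W$; subtracting it leaves the absolutely continuous part $\int_0^r\lan\phi,Z_s\big([\sg_0,W]+\tfrac12\sum_k[\sg_k,[\sg_k,W]]\big)(X_s)\ran\,ds\equiv 0$, and since the second-generation terms were already killed by a further quadratic-variation step, $g_{[\sg_0,W]}\equiv 0$. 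Starting from $W=\sg_l$, $l\gt 1$, induction yields $g_W\equiv 0$ for every iterated bracket $W$ appearing in H; here Assumptions A9 and A10 (via Lemma \ref{lem4}) guarantee that all these brackets are genuine $C^2$ fields $E\to e^{TA}E$, so that Lemma \ref{lee-2} applies at each step.

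Evaluating at $r=0$, where $Z_0=\id$ and $X_0=x$, gives $\lan\phi,W(x)\ran=0$ for every field $W$ in the list of H\"ormander's condition H. Since these vectors span a dense subspace of $E$ and $\phi\in E^*$ is continuous, $\phi=Y_t^*F^*v=0$. Finally I would rule out $v\neq 0$: because $F$ has rank $k$, $F^*$ is injective, and because $Y_t=e^{tA}P_t$ with $P_t$ surjective (Theorem \ref{th-h1}) and $e^{tA}$ injective by A6, the operator $Y_t^*F^*$ is injective, so $v=0$, a contradiction. Hence $\gm_t$ is a.s.\ invertible and the criterion yields the claim. I expect the genuine obstacle to be the bracket iteration of the previous paragraph: making the pathwise quadratic-variation and Riemann-sum arguments rigorous in the $E$-valued setting with a non-adapted direction $\phi$, and checking that every iterated Lie bracket remains an admissible integrand for Lemma \ref{lee-2}; the concluding injectivity of $Y_t^*F^*$, which quietly needs $e^{tA}E$ to be large enough in $E$, is a second point requiring care.
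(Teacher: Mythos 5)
Your overall architecture matches the paper's: reduce to a.s.\ invertibility of $\gm_t=(F\circ Y_t)C_t(F\circ Y_t)^*$ via the representation $D_rX_t=Y_tZ_r\sg(X_r)$ (established exactly as you indicate, by uniqueness for \rf{SDE-MD}), run a bracket iteration based on Lemma \ref{lee-2}, and close with H\"ormander's condition H and the injectivity of $Y_t^*F^*$. The divergence occurs precisely at the step you yourself flag as the obstacle, and there the gap is genuine. You pair everything with the random, non-adapted functional $\phi=Y_t^*F^*v$ (with $v$ itself a random null direction of $\gm_t$) and then want to split $\lan\phi,Z_rW(X_r)\ran\equiv 0$ into a vanishing martingale part and a vanishing drift part by a pathwise quadratic-variation argument. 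But $r\mapsto\lan\phi,Z_rW(X_r)\ran$ is not a semimartingale for the filtration $\mc F_r$ (the multiplier $\phi$ is only $\mc F_t$-measurable), and enlarging the filtration by $\sg(\phi)$ need not preserve the semimartingale property; so neither the uniqueness of the semimartingale decomposition nor the standard identification of the quadratic variation is available to you. A F\"ollmer-type pathwise quadratic variation for the $E$-valued process, tested against $\phi\otimes\phi$, might conceivably be pushed through, but it is not standard in this Banach-space setting and you have not supplied it; as written, the passage from $g_W\equiv 0$ to $g_{[\sg_k,W]}\equiv 0$ is unsupported.

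The paper avoids this problem with a device you are missing: the Blumenthal zero--one law. Define $K_s=\ovl{\spn\{Z_\zeta\sg_k(X_\zeta):\ \zeta\in[0,s],\ k\in\Nnu\}}$ and $K_{0+}=\cap_{s>0}K_s$; then $K_{0+}$ is a \emph{deterministic} subspace, and the stopping time $S$ at which $K_s$ first strictly exceeds $K_{0+}$ is a.s.\ positive. If $K_{0+}\ne E$, one may choose a fixed, deterministic $\ffi\in E^*\setminus\{0\}$ annihilating $K_{0+}$; for such $\ffi$ the process $\lan\ffi,Z_sV(X_s)\ran$ is an honest continuous semimartingale, its vanishing on $[0,S)$ forces both its quadratic variation and its bounded-variation part to vanish there, and the bracket iteration through the families $\Sg'_n$ (which package $[\sg_0,V]+\frac12\sum_{k}[\sg_k,[\sg_k,V]]$ as a single field, exactly as your drift computation requires) yields $\ffi(\Sg'_n(x))=0$ for all $n$, contradicting H. Hence $K_{0+}=E$, and on the a.s.\ event $\{K_t=E\}$ the form $\<C_t\psi,\psi\>$ is positive for \emph{every} nonzero $\psi\in E^*$ simultaneously, random or not --- which is what disposes of your random direction $\phi$ without ever having to differentiate against it. I recommend restructuring the non-degeneracy step around this reduction to a deterministic functional; the rest of your outline (chain rule, $D_rX_t=Y_tZ_r\sg(X_r)$, $\ker Y_t^*F^*=\{0\}$, Lemma \ref{lem4} guaranteeing that the iterated brackets are admissible for Lemma \ref{lee-2}) is consistent with the paper.
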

\begin{proof}
In Theorem \ref{thhh2} we proved that the Malliavin derivative
$D_rX_t$, where $r\lt t$ are fixed, verifies equation \rf{SDE-MD}.
The uniqueness of the solution to \rf{SDE-MD} in $S_2(\gm(H,E))$
follows from the results of Section \ref{Diff}.
Let us note that the process $Y_t Z_r \sg(X_r)$ also satisfies \rf{SDE-MD}.
Indeed, \rf{3} implies:
\aa{
Y_t = e^{(t-r)}Y_r + \int_r^t e^{(t-s)A} \al'(X_s)Y_s\, ds
+\int_0^t e^{(t-r)A}\sg'(X_s) Y_s\, dW_s. 
}
Noticing that $Y_tZ_r\sg(X_r)$ takes values in $\gm(H,E)$, we multiply
the both sides of the above equation by
$Z_r\sg(X_r)$. Taking into consideration that 
$Y_rZ_r\sg(X_r)=\sg(X_r)$, we obtain that 
$Y_t Z_r \sg(X_r)$ verifies \rf{SDE-MD}.
By uniqueness of the solution to \rf{SDE-MD} in $S_2(\gm(H,E))$,
$D_rX_t = Y_tZ_r\sg(X_r)$.

Fix a $t>0$.
Let $X_s$ be the solution of \eqref{SDE}, and let $\xi_s = F X_s$.
By Lemma \ref{chain-lem}, the Malliavin derivative $D_r \xi_t$, $r<t$,
equals to
\aaa{
 \label{Malliavin_deriv}
 D_r \xi_t = F \, Y_t \, Z_r  \,\sg(X_r).
}
Further let $\gm_t$ denote the Malliavin covariance matrix of 
$\xi_t$. Using relation \eqref{Malliavin_deriv} we can write down $\gm_t$ in the form:
\aa{
\gm_t = (F\circ Y_t)C_t(F\circ Y_t)^*
}
where the operator $C_t : E^* \to E$ is defined as
\aa{
 C_t = \int_0^t Z_r \sg(X_r) \sg(X_r)^* Z_r^* \, dr.
 }
By Theorem 2.1.2. of \cite{Nualart}, the statement of the theorem
will be implied by the invertibility of $\gm_t$.
In order to show that $(\gm_t x, x)_{\Rnu^k}>0$ for all $x\in \Rnu^k$,
it suffices to prove that for all $\ffi \in E^*$, $\ffi \ne 0$,
\aaa{
\label{C_t_pos} 
\< C_t \ffi, \ffi \> > 0
}
with probability one.  
Indeed,, for every $x\in \Rnu^k$,
\aa{
(\gm_t x, x)_{\Rnu^k} = \< C_t Y_t^* F^*x, Y_t^* F^*x\>.
}
Note that ${\rm ker}\, F^* = \{0\}$. Indeed, assume that there is a
$z\in \Rnu^k$, $z\ne 0$, such that $F^* z = 0$. Then, for all $y\in E$,
$(Fy, z)_{\Rnu^k} = \<y, F^*z\> = 0$. 
Thus, $z$ is orthogonal to $\im F$ in $\Rnu^k$ which contradicts to 
the assumption that $F$ has rank $k$.
Also, note that  ${\rm ker}\, Y_t^* = \{0\}$. Indeed, 
$Y_t^*y'=0$ implies $Z_t^*Y_t^*y' = y' = 0$. 
Hence, $Y_t^* F^*x \in E^*$ is non-zero if $x\in\Rnu^k$ is non-zero.

Thus we have to prove \eqref{C_t_pos}. 
Let us assume that there exists a $\ffi_0 \ne 0$ such that
\eee{
\label{hyp}
P\{\< C_t \ffi_0, \ffi_0 \> =0\} > 0.
}
Take a $\ffi\in E^*$. We have:
\mmm{
 \label{88}
 \< C_t\ffi, \ffi \> =  \int_0^t \|\sg(X_r)^* Z_r^* \ffi\|_H^2 \,dr =
 \int_0^t \sum_{k=1}^\infty  (e_k, \sg(X_r)^* Z_r^* \ffi)_H^2 \, dr \\
 =\int_0^t \sum_{k=1}^\infty  \< Z_r\,\sg_k(X_r), \ffi \>^2 \, dr.
}
Define random spaces $K_s\sub E$:
\[
K_s = \ovl{\spn\{ Z_\zeta \sg_k(X_\zeta); \, \zeta \in [0,s], k\in \Nnu\}}.
\]
The family of vector spaces $\{K_s, s \gt 0\}$ is increasing. 
Let $K_{0+} = \cap_{s>0} K_s$.
By the Blumental zero-one law, $K_{0+}$ is deterministic with probability one,
since every random variable $b\in K_{0+}$ is constant with probability one.
Let $N>0$ be an integer, and let $N_s$ be the codimension of $K_{0+}$ in
$K_s$, possibly infinite. Consider the non-decreasing adapted 
process $\{\min\{N,N_s\}, s>0\}$,
and the stopping time
\[
S = \inf\bigl\{ s>0: \, \min\{N, N_s\}>0 \bigr\}.
\]
Note that $\PP\{S>0\}=1$. 
Indeed, if we assume that $\PP\{S=0\} > 0$, it would imply that 
with a positive probability the codimension of $K_{0+}$ in $K_s$ 
is positive for any $s>0$.
The latter fact  implies that with a positive probability 
the codimension of $K_{0+}$ in 
$\cap_{s>0}K_s$ is positive as well, which is a contradiction. 

Next, note that $K_{0+} \ne E$. Indeed, if $K_{0+} = E$,
then $K_s = E$ for all $s>0$.
Therefore, if $\ffi\in E^*$ is such that  $\< C_t\ffi, \ffi \> =0$,
then $\< Z_r\sg_k(X_r), \ffi\> = 0$ for all $r\in [0,t]$ and for all $k\in \Nnu$
by \rf{88}.
This implies that $\ffi$ is zero on $K_s$, and hence, $\ffi = 0$,
which contradicts to hypothesis   \eqref{hyp}.

Take a non-zero functional $\ffi\in E^*$ containing $K_{0+}$ in its kernel.
Note that for all $s < S$, $\ffi(K_s)=0$, and hence,
\aaa{
\label{77}
\<Z_s\sg_k(X_s),\ffi\> = 0 \quad \text{for all} \;  k \; \text{and} \; s<S.
}
Introduce the following sets of vector fields:
\aa{
 \Sg_0 = & \{\sg_1, \sg_2, \ldots, \sg_k, \ldots \}\\
 \Sg_n = & \{[\sg_0,V], \, [\sg_k,V], k\in \Nnu, \, V\in \Sg_{n-1} \}\\
 \Sg = & \cup_{n=1}^\infty \Sg_n
}
and
\aa{
 \Sg'_0 = & \Sg_0, \\
 \Sg'_n = & \{ [\sg_k,V], k=1,2, \ldots, \, V\in \Sg'_{n-1}; \\
            & [\sg_0,V] + \frac12 \sum_{k=1}^\infty [\sg_k,[\sg_k,V]], \, V\in \Sg'_{n-1}\}\\
\Sg' = & \cup_{i=1}^\infty \Sg'_n.
}
Let $\Sg_n(x)$ and $\Sg'_n(x)$ denote the subspaces of $E$ obtained from 
$\Sg_n$ and $\Sg'_n$, respectively, by 
evaluating the vector fields of the latters at point $x\in D(A^\infty)$. 
Note that the vector fields from $\Sg_n$ (resp. $\Sg'_n$) 
are well-defined on $D(A^{n-1})$.
Clearly,
the spaces spanned on $\Sg(x)$ and $\Sg'(x)$ coincide with each other.
They also coincide with the space $E$ by Assumption H.
Let us show that 
\aaa{
\label{99}
\ffi(\Sg'_n(x))=0 \quad  \text{for all} \; n. 
}
By Assumption H,
this will imply that $\ffi = 0$, and hence, it will be a contradiction.
Property \eqref{99} is implied by the following stronger property:
\aaa{
\label{H_contradiction}
\<Z_s V(X_s),\ffi\> = 0 \quad \text{for all} \; s<S, \, V\in \Sg'_n, \, n\gt 0.
}
We show \eqref{H_contradiction} by induction on $n$. For $n=0$, \eqref{H_contradiction}
follows from \eqref{77}. We assume that \eqref{H_contradiction} holds for $n-1$, and show
that it holds for $n$. Let $V\in \Sg'_{n-1}$. 
Note that if $V$ takes values in $e^{tA}E$,
then $[\sg_k,V]$ and $[\sg_0,V] + \frac12 \sum_{k=1}^\infty [\sg_k,[\sg_k,V]]$
also take values in $e^{tA}E$. By Lemma \ref{lee-2},
\mm{
0 = \<Z_s V(X_s),\ffi\> = 
\<V(x),\ffi\> + \sum_{k=1}^\infty \int_0^s \< Z_r [\sg_k,V](X_r),\ffi\> dW^k_r \\
 +\int_0^s \< Z_r
 \Bigl\{ 
 [\sg_0,V] + \frac12\sum_{k=1}^\infty [\sg_k,[\sg_k,V]]
\Bigr\}(X_r),\ffi\> dr
}
which holds for all $s<S$. Since $\<V(x),\ffi\>=0$, it implies that
for all $s < S$, the quadratic variation of the martingale part
and the bounded variation part of this semimartingale must be zero.
This proves \eqref{H_contradiction}.
\end{proof}

{\bf Acknowledgements.} This research was funded by
the European Regional Development Fund through the program COMPETE
and by the Portuguese Government through the FCT (Funda\c{c}\~ao
para a Ci\^encia e a Tecnologia) under the project
PEst-C/MAT/UI0144/2011.

\end{document}